\documentclass[smallextended]{svjour3}
\smartqed
\usepackage{amsfonts,amsmath,amssymb,mathtools,tikz,graphicx}
\usepackage{epstopdf}
\usepackage{algorithmic}
\usepackage{algorithm}
\newcommand{\R}{\mathbf{R}}
\newcommand{\oo}{\mathcal{O}}
\newcommand{\V}{V_+}
\newcommand{\ff}{\mathbf{f}}
\newcommand{\ffp}{\mathbf{f}_+}
\newcommand{\df}{\delta\mathbf{f}}
\newcommand{\dfp}{\delta\mathbf{f}_+}
\newcommand{\eqdef}{:=}
\newcommand{\mat}[1]{\begin{bmatrix}#1\end{bmatrix}}

\begin{document}

\title{Efficient Calculation of Regular Simplex Gradients}
\titlerunning{Regular Simplex Gradients}        

\author{Ian Coope         \and
        Rachael Tappenden }

\institute{I. Coope and R. Tappenden \at
              School of Mathematics and Statistics, University of Canterbury, Private Bag 4800, Christchurch 8041, New Zealand\\
              \email{ian.coope@canterbury.ac.nz; rachael.tappenden@canterbury.ac.nz}           
}

\date{Received: date / Accepted: date}

\maketitle

  \begin{abstract}
    Simplex gradients are an essential feature of many derivative free optimization algorithms, and can be employed, for example, as part of the process of defining a direction of search, or as part of a termination criterion. The calculation of a general simplex gradient in $\R^n$ can be computationally expensive, and often requires an overhead operation count of $\oo(n^3)$ and in some algorithms a storage overhead of $\oo(n^2)$. In this work we demonstrate that the linear algebra overhead and storage costs can be reduced, both to $\oo(n)$, when the simplex employed is regular and appropriately aligned.
    We also demonstrate that a gradient approximation that is second order accurate can be obtained cheaply from a combination of two, first order accurate (appropriately aligned) regular simplex gradients. Moreover, we show that, for an arbitrarily aligned regular simplex, the gradient can be computed in $\oo(n^2)$ operations.
    \keywords{Positive bases \and numerical optimization \and derivative free optimization \and regular simplex \and simplex gradient \and least squares \and well poised.}
    \subclass{52B12 \and 65F20 \and 65F35 \and 90C56}
  \end{abstract}


\section{Introduction}\label{S_Intro}
Estimating derivatives is important in a wide variety of applications and many successful numerical optimization algorithms rely on gradient information and/or directional derivatives. When analytical derivatives are not directly available, it is useful to be able to obtain gradient estimates, for example, by using difference methods. Furthermore, simplex gradients are often used in derivative-free optimization as search directions, like is the case of the implicit filtering algorithm \cite{Bortz98}, as descent indicators for reordering the poll directions in directional direct search \cite{Custodio07}, or in the definition of stopping criteria for algorithms \cite{Custodio08}. A first comprehensive study on the computation of general simplex gradients was provided in \cite{Regis15}. In this work we investigate computationally efficient approaches to estimating the gradient at either the centroid or vertex of an appropriately aligned regular simplex.

To obtain a first order approximation to the gradient (of the function $f$ at some point $x_0$) one considers the first order Taylor approximation about $x_0$:
  \begin{equation*}
    f(x)  = f(x_0) + (x-x_0)^T\nabla f(x_0) +\oo(\|x-x_0\|_2^2).
  \end{equation*}
 Consider a set of $m+1$ points ($m\geq n$), $x_0,x_1,\dots,x_m\in \R^n$. Using the notation $g \approx \nabla f(x_0)$ to denote an approximation to the gradient, $f_j \eqdef f(x_j)$, and ignoring the order terms, the previous expression leads to the following system of equations
  \begin{eqnarray}\label{geq}
    f_j-f_0 = (x_j-x_0)^Tg  \quad \text{ for } j=1,\dots,m.
  \end{eqnarray}
Expression \eqref{geq} is a linear regression model, and determining a least squares solution to the system \eqref{geq} results in an approximation to the gradient of the underlying function.
If $m=n+1$ and the $n+1$ points are affinely independent then \eqref{geq} is a determined system with unique solution independent of the ordering of the points.
When $m>n+1$ the order of the points used is important because the least squares solution to the system \eqref{geq} depends on which point is labelled $x_0$.

In this work we restrict our attention to the case where $m = n+2$ and the points $x_1,x_2,...,x_{n+1}$ defining the regression model in \eqref{geq} are the vertices of an appropriately aligned regular simplex and $x_0$ is its centroid. (This will be defined in Section~\ref{S_NotationPreliminaries}.) The main theme here is to determine a least squares solution to the system \eqref{geq} efficiently, both in terms of the linear algebra costs and in terms of storage requirements, to determine an appropriately aligned regular simplex gradient.

For the regular simplexes discussed in this work, the centroid of the simplex is denoted by $x_0$, and each vertex $x_1,x_2,...,x_{n+1}$ is equidistant from the centroid with
\begin{equation}\label{eq:h}
  h \eqdef \|x_j-x_0\|_2, \quad j=1,2,\dots,n+1,
\end{equation}
where the distance $h$ is sometimes referred to as the `radius' or `arm length' of the regular simplex.

The system in \eqref{geq} is central to many derivative free optimization algorithms, but solving it can be a computational challenge. Firstly, usually the vectors $x_0,\dots,x_{n+1}$ (or the differences $x_1-x_0,\dots,x_{n+1}-x_0$) must be explicitly stored, which can be costly in terms of memory requirements, and also poses a limitation in terms of the size of problems that can be solved using such algorithms. Secondly, the computational cost (number of floating point operations) of solving such problems can also be high (e.g., if the problem is unstructured or if a general simplex is used).

In this work we investigate the use of regular simplexes. The computation of regular simplex gradients was proposed in the context of derivative-free optimization of molecular geometries \cite{Alberto04}. One advantage of using a regular simplex is that it provides a uniform, economic `tiling' of $n$-dimensional space, each $n$-dimensional tile having only $n+1$ vertices compared to $2^n$ vertices for a hypercube tile. A disadvantage is that storing the vertices of the simplex is usually less efficient than that for a hypercube because it is possible to align the edges
of the hypercube with the coordinate axes.  However, if the orientation of
the regular simplex is free to be chosen also, then we will show that it is possible to generate each vertex from a single vector by  simple adjustment  of one component.  This enables considerable savings in storage requirements for several lattice search algorithms for optimization. For example, the multidirectional search (MDS) method
of Torczon \cite{Torczon89}, \cite{Torczon91} can be implemented using either
a rectangular or a regular simplex based lattice but the usual construction
for the regular simplex lattice requires $\oo(n^2)$ storage (see for example \cite{Torczon89}).
Similarly, the Hooke and Jeeves \cite{Hooke+J61} lattice search method, although originally implemented in a rectangular lattice framework, can also
be implemented using a regular simplex lattice.
(It is anticipated that each of these methods will benefit, in terms of memory requirements and computational effort, if the particular simplex construction used in this work is employed.)
The added advantage of being able to compute an aligned regular simplex gradient in $\oo(n)$ housekeeping operations using only $n+1$ function evaluations makes it attractive for many numerical gradient based algorithms for optimization, including the recent minimal positive basis based direct search conjugate gradient methods described in \cite{Liu2011}.

The vertices of a simplex are often explicitly required during the initialization of simplex based algorithms for optimization, including the algorithms in
\cite{Nelder1965},
 \cite{Parkinson+H72},
 \cite{Price+CB2002},
\cite{Spendley+H62},
 \cite{Torczon89}.
    Using the technique described later, the vertices of an aligned regular simplex can be constructed explicitly, whenever required, very efficiently. However, we also show that the vertices of the aligned regular simplex do not have to be stored in order to calculate the simplex gradient.

In derivative free optimization, one must always be mindful of the cost of
function evaluations. There exist real-world applications for which computing
a single function evaluation can be very costly, and in such cases
it is clear that the linear algebra and memory requirements may be very small in comparison. In this work, we focus on algebraically efficient methods to compute the simplex gradient \emph{after function evaluations are complete}. In most situations, function evaluations will dominate the overall time to compute a simplex gradient. However, this trend should not be used to justify performing other portions of the computation inefficiently. It is prudent to be as economical as possible at every stage of the optimization process.




\subsection{Contributions}
We state the main contributions of this work (listed in order of appearance).
\begin{enumerate}
  \item \textbf{Aligned regular simplex gradient in $\mathbf{\oo(n)}$ operations.} A simplex gradient is the (least squares) solution of a system of linear equations, which, in general, comes with an associated $\oo(n^3)$ computational cost. In this work we show that, if one employs a regular simplex that is appropriately aligned, then the linear system simplifies, and the aligned regular simplex gradient can be computed in $\oo(n)$ operations. Indeed, the gradient of the aligned regular simplex is simply a weighted sum of a vector containing function values (measured at the vertices of the simplex) and a constant vector. This is an important saving over the $\oo(n^3)$ computational cost of solving a general unstructured linear system. (See Section~\ref{S_Simplex}.)
  \item \textbf{Aligned regular simplex need not be explicitly stored.} In this work we demonstrate that the storage needed for the computation of the aligned regular simplex gradient is $\oo(n)$, whereas the usual storage requirements for the computation of a general simplex gradient are at least $\oo(n)$ \emph{vectors} (i.e., $\oo(n^2)$). In particular, it is simple and inexpensive to construct the vertices of the aligned regular simplex on-the-fly, and the simplex need not be stored explicitly at all. This is because all that is required to uniquely specify (and construct) each simplex vertex is the centroid $x_0$, the simplex `arm length' $h$ \eqref{eq:h} and the problem dimension $n$. To compute the aligned regular simplex gradient, the function values at the vertices of the simplex are required, but once a vertex has been constructed and the function value found, the vertex can be discarded. Therefore, the storage requirements of this approach are low. (See Section~\ref{S_Simplex}.)
  \item \textbf{Function value $f_0$ is not required.} We show that the function value $f_0$ at the centroid of the regular simplex is not required in the calculation of the regular simplex gradient (at the point $x_0$).
      Moreover, we extend this result to show that it also applies to any general simplex, and not just a regular one. (See Section~\ref{S_Weight}.)
  \item \textbf{Regular simplex gradient in $\mathbf{\oo(n^2)}$ operations.} In some applications, it may not be possible to ensure the particular alignment of the regular simplex. In this case, we show that it is still possible to calculate the regular simplex gradient in $\oo(n^2)$ floating point operations. (See Section~\ref{S_RegSimp}.)
  \item \textbf{Inexpensive $\mathbf{\oo(h^2)}$ gradient approximation.} We show that one can efficiently compute an accurate (order $h^2$) gradient approximation using a Richardson extrapolation type approach. Specifically, if two first order accurate aligned regular simplex gradients are combined in a particular way, then a second order accurate approximation to the true gradient $\nabla f(x_0)$ is obtained. That is, an $\oo(h^2)$ gradient approximation is simply the weighted sum of two $\oo(h)$ aligned regular simplex gradients. Moreover, no additional storage is required to generate the $\oo(h^2)$ gradient approximation. (See Section~\ref{S_Oh2}.)
\end{enumerate}


%


\subsection{Paper Outline}
This paper is organised as follows. In Section~\ref{S_NotationPreliminaries} we introduce the notation and technical preliminaries that are necessary to describe and set up the problem of interest. In particular, we introduce the concepts of a minimal positive basis, how a minimal positive basis is related to a simplex, and we also state the definition of a simplex gradient. In Section~\ref{S_Simplex} the main results of this work are presented, including how to construct the simplex and how to compute the aligned regular simplex gradient in $\oo(n)$ operations. In Section~\ref{S_extensions} we describe several extensions to the work presented in Sections~\ref{S_Intro},~\ref{S_NotationPreliminaries}~and~\ref{S_Simplex}, including a special case of a regular simplex with integer entries, as well as a technique to obtain an $\oo(h^2)$ gradient approximation from two $\oo(h)$ aligned regular simplex gradients. Numerical experiments are presented in Section~\ref{S_Numerical} to demonstrate how the aligned regular simplex and its gradient can be computed in practice, as well as how to generate an $\oo(h^2)$ gradient approximation. Finally we make our concluding remarks in Section~\ref{S_Conclusion}, and we also discuss several ideas for possible future work.

\section{Notation and Preliminaries}\label{S_NotationPreliminaries}
Here we define the variables that are used in this work, and fix the notation. We also give several preliminary results that will be used later.

\subsection{Notation}\label{S_Notation}
Consider a set of $n+2$ points $x_0,x_1,\dots,x_{n+1} \in \R^n$, where $x_0$
is the centroid of the $n+1$ points $x_1,\dots,x_{n+1}$, and suppose that the function values $f_1,\dots,f_{n+1}$ are known. The function value $f_0$ also appears in this work, although we will present results to confirm that it is not used in the computation of the aligned regular simplex gradient, so it is unnecessary to assume that $f_0$ is known. Let $e$ be the (appropriately sized) vector of all ones and define the following vectors,
  \begin{equation}\label{fvec}
    \ff \eqdef \mat{f_1\\\vdots\\f_n},\qquad\text{and}\qquad \df \eqdef \ff-f_0 e = \mat{f_1-f_0\\\vdots\\f_{n}-f_0},
  \end{equation}
  along with their `extended' versions,
  \begin{equation}\label{fvecex}
    \ffp = \mat{\ff\\f_{n+1}},\qquad\text{and}\qquad \dfp \eqdef \ffp-f_0 e = \mat{\df\\f_{n+1}-f_0} .
  \end{equation}

  For a general simplex (to be defined precisely in the next section), the internal `arms' of the simplex are $\nu_j = x_j-x_0$ for $j=1,\dots,n+1.$ However, in this paper, we will only be considering \emph{regular} simplexes. In this case it is convenient to denote the `arms' of the regular simplex using the vectors $v_1,\dots,v_{n+1}$, which satisfy the relationship
  \begin{eqnarray}\label{vj}
    x_j=x_0 +hv_j,  \qquad \text{ for }\; j=1,\dots,n+1,
  \end{eqnarray}
  for some (fixed) $h \in \R$, with $\|v_j\|_2 = 1$ and $\|x_j-x_0\|_2=h$ for $j=1,\dots,n+1$.
  Thus $h>0$ is the radius of the circumscribing hypersphere of the regular simplex and each $v_j$ denotes a \emph{unit} vector defining the direction
  of each vertex from the centroid of the simplex.

Now we can define the matrix
  \begin{eqnarray}\label{GenericV}
    V = \mat{v_1&\dots&v_n}\in \R^{n\times n}
  \end{eqnarray}
  and the vector
  \begin{eqnarray}\label{vnp1}
    v_{n+1} = -\sum_{j=1}^n v_j \equiv -Ve,
  \end{eqnarray}
along with the `extended' matrix
  \begin{eqnarray}\label{Vplus}
  \V \eqdef \mat{V& -Ve} \in \R^{n\times (n+1)}.
  \end{eqnarray}

\subsection{Technical Preliminaries}\label{S_Preliminaries}

Here we outline several technical preliminaries that will be used in this work. These properties are known but we state them here for completeness. For further details on the results discussed here, see, for example,
\cite{Audet17},
\cite[p.32--34]{Boyd04},
 \cite[Chapter~2]{Conn09}.

\begin{definition}[Affine independence, pg.~29 in \cite{Conn09}]
  A set of $m+1$ points $y_1,y_2\dots,y_{m+1}\in\R^n$ is called affinely independent if the vectors\\ $y_2-y_1,\dots,y_{m+1}-y_1$ are linearly independent.
\end{definition}

\begin{definition}[Definition~2.15 in \cite{Conn09}]
  Given an affinely independent set of points $\{y_1,\dots,y_{m+1}\}$, its convex hull is called a simplex of dimension $m$.
\end{definition}

\begin{definition}
  A regular simplex is a simplex that is also a regular polytope.
\end{definition}

A regular simplex has many interesting properties, see for example \cite{ElGebeily04}.
\begin{proposition}\label{Prop:1}
A regular simplex satisfies the following properties.
\begin{enumerate}
  \item The distance between any two vertices of the simplex is constant.
  \item The centroid of a regular simplex is equidistant from each vertex.
  \item The angle between the vectors formed by joining the centroid to any two vertices of the simplex is constant.
\end{enumerate}
\end{proposition}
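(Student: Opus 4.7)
The plan is to exploit the defining symmetry of a regular polytope, namely that its symmetry group acts transitively on vertices (and on edges), and that every symmetry is an isometry of $\R^n$.

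First I would record the elementary fact that if a finite set of points $\{x_1,\dots,x_{n+1}\}$ is mapped to itself by an isometry $T$, then $T$ fixes the centroid $x_0 = \frac{1}{n+1}\sum_{j=1}^{n+1} x_j$, because $T$ commutes with affine averaging. With this in hand, property 2 follows immediately from vertex-transitivity: given any two vertices $x_i,x_j$, there is a symmetry $T$ with $T(x_i)=x_j$, and since $T(x_0)=x_0$ and $T$ is an isometry, $\|x_j-x_0\|_2 = \|T(x_i)-T(x_0)\|_2 = \|x_i-x_0\|_2$. Property 1 is analogous, using edge-transitivity: any edge $(x_i,x_j)$ can be mapped by a symmetry to any other edge $(x_k,x_\ell)$, and isometries preserve the edge length.

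Property 3 I would then derive as a consequence of properties 1 and 2 rather than appealing to symmetry again. Given any two vertices $x_i,x_j$ with $i\neq j$, consider the triangle with vertices $x_0,x_i,x_j$. By property 2 the two sides from $x_0$ have common length $h$, and by property 1 the third side $\|x_i-x_j\|_2$ has a common value, say $d$, independent of the pair. The law of cosines then gives
\begin{equation*}
  \cos\theta_{ij} \;=\; \frac{2h^2 - d^2}{2h^2},
\end{equation*}
where $\theta_{ij}$ is the angle at $x_0$ between $x_i-x_0$ and $x_j-x_0$. Since the right-hand side is independent of $i,j$, the angle is constant.

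The only delicate point is really the invocation of the symmetry group of the regular polytope, since the definition given in the excerpt is terse. I would therefore make the argument self-contained by citing the standard equivalent characterisation (flag-transitivity implies both vertex- and edge-transitivity; see \cite{ElGebeily04}) and then proceed with the isometry arguments above. Once the centroid is known to be fixed by all symmetries, the three properties fall out in the order 1, 2, 3 with essentially no further computation.
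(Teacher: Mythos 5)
Your proof is correct, and it is more self-contained than the one in the paper. The paper disposes of the three properties by citation and assertion: property 1 is declared a direct consequence of the definition, property 2 is delegated to Theorem~10 of \cite{ElGebeily04}, and property 3 is deduced from the first two (implicitly by the same law-of-cosines computation you write out). Your route replaces the external citation with the standard symmetry argument: the symmetry group of a regular polytope acts transitively on vertices and edges, every symmetry is an isometry, and an isometry permuting a finite point set fixes its centroid because isometries of $\R^n$ are affine and hence commute with averaging. That single observation yields properties 1 and 2 simultaneously, after which your derivation of property 3 coincides with the paper's. What your approach buys is a proof that does not lean on an unexamined reference and that makes explicit exactly which feature of regularity (flag-transitivity, hence vertex- and edge-transitivity) is being used; what the paper's approach buys is brevity, which is appropriate given that the proposition only collects well-known facts. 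One small remark: for a simplex specifically, every pair of distinct vertices spans an edge, so your appeal to edge-transitivity for property 1 is legitimate without any case analysis; it would be worth saying this in one clause, since for general regular polytopes ``the distance between any two vertices'' is not a single constant.
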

\begin{proof}
  The first property is a direct consequence of the definition. The second property is established in Theorem~10 in \cite{ElGebeily04}. The third property follows from the first and second properties.
\end{proof}
Thus, for a regular simplex, using Proposition~\ref{Prop:1} it can be established (see for example, \cite{ElGebeily04}) that the centroid of the simplex $x_0$ is equidistant from each vertex of the simplex, and we will say that each (internal) simplex `arm' (vectors $v_j$ for $j=1,\dots,n+1$) is of equal length, and the angles between any two arms of the simplex are equal.

The positive  span of a set of vectors $\{y_1,\dots,y_m\}$ in $\R^n$ is the convex cone
\begin{equation*}
  \{y \in \R^n : y = \alpha_1 y_1 + \dots + \alpha_m y_m, \; \alpha_i \geq 0, i = 1,\dots, m\}.
\end{equation*}

\begin{definition}[Definition~2.1 in \cite{Conn09}]\label{Def_posspanset}
  A positive spanning set in $\R^n$ is a set of vectors whose positive span is $\R^n$. The set $\{y_1,\dots,y_m\}$ is said to be positively dependent if one of the vectors is in the convex cone positively spanned by the remaining vectors, i.e., if one of the vectors is a positive combination of the others; otherwise, the set is positively independent. A positive basis in $\R^n$ is a positively independent set whose positive span is $\R^n$.
\end{definition}

\begin{remark}
  Definition~\ref{Def_posspanset} is taken directly from \cite[Definition~2.1]{Conn09}. As is stated in Footnote 2 of that work, ``strictly speaking we should have written \emph{nonnegative} instead of positive, but we decided to follow the notation in \cite{Davis54,Lewis99}''.
\end{remark}

\begin{lemma}[Minimal Positive Basis, Corollary~2.5 in \cite{Conn09}]\label{L_minposbasis}
\quad
  \begin{itemize}
  \item[(i)] $\mat{I&-e}$ is a minimal positive basis.
\item[(ii)] Let $W=\mat{w_1&\dots&w_n}\in \R^{n\times n}$ be a nonsingular matrix. Then $\mat{W&-We}$ is a minimal positive basis for $\R^n$.
\end{itemize}
\end{lemma}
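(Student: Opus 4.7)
The plan is to prove (i) directly from the definitions, and then deduce (ii) by transporting the result of (i) under the invertible linear map $W$.

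For part (i), I would first verify that $\{e_1,\dots,e_n,-e\}$ positively spans $\R^n$. Given any $x\in\R^n$, set $\alpha \eqdef \max\{0,-\min_i x_i\}\geq 0$; then $x_i+\alpha\geq 0$ for every $i$, and the identity $x = \sum_{i=1}^n (x_i+\alpha)\,e_i + \alpha(-e)$ exhibits $x$ as a nonnegative combination of the proposed basis. Next I would check positive independence. If $e_i$ could be written as $\sum_{j\neq i}\lambda_j e_j + \mu(-e)$ with $\lambda_j,\mu\geq 0$, then comparing $i$-th coordinates forces $\mu=-1<0$, a contradiction; and if $-e$ could be written as $\sum_i \lambda_i e_i$ with $\lambda_i\geq 0$, then every coordinate would give $\lambda_i=-1<0$, again a contradiction. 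Combined with the standard fact that a positive basis in $\R^n$ has at least $n+1$ elements (which the paper cites from \cite{Conn09}), the set $\{e_1,\dots,e_n,-e\}$ of cardinality $n+1$ is a \emph{minimal} positive basis. In matrix form this is exactly $[I\ -e]$.

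For part (ii), the key observation is that the map $y\mapsto Wy$ is a bijection of $\R^n$ since $W$ is nonsingular, and both positive spanning and positive independence are preserved under such a bijection. Explicitly, if $\{u_1,\dots,u_m\}$ positively spans $\R^n$, then for any $y\in\R^n$ we can write $W^{-1}y = \sum_i \lambda_i u_i$ with $\lambda_i\geq 0$, hence $y=\sum_i \lambda_i (Wu_i)$; so $\{Wu_1,\dots,Wu_m\}$ positively spans $\R^n$. Conversely, any positive dependence among $\{Wu_1,\dots,Wu_m\}$ pulls back through $W^{-1}$ to a positive dependence among $\{u_1,\dots,u_m\}$. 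Applying this transport to the minimal positive basis $[I\ -e]$ from part (i) gives that $[W\ -We]$ is a minimal positive basis for $\R^n$, since the columns of $W I = W$ are $w_1,\dots,w_n$ and the last column is $W(-e)=-We$.

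The main obstacle is essentially bookkeeping rather than depth: one must be careful with the convention (as the remark notes, ``positive'' here is actually ``nonnegative''), and one must invoke the cardinality lower bound for positive bases in $\R^n$ to upgrade ``positive basis of size $n+1$'' to ``minimal positive basis.'' Once the transport-under-$W$ principle is in hand, part (ii) is a one-line consequence of part (i).
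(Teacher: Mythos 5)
Your proof is correct. Note, however, that the paper does not prove this lemma at all: it is stated as a known result, imported verbatim as Corollary~2.5 of the cited reference \cite{Conn09}, so there is no in-paper argument to compare against. Your argument is the standard one — direct verification of positive spanning (via the shift $\alpha=\max\{0,-\min_i x_i\}$) and positive independence by coordinate comparison for part (i), then transport of both properties under the invertible map $W$ for part (ii) — and all steps check out. The only small caveat is that the cardinality lower bound you invoke to conclude \emph{minimality} (a positive spanning set of $\R^n$ has at least $n+1$ elements) is not actually stated or cited anywhere in this paper; it is a genuine external ingredient (Theorem~2.3 of \cite{Conn09}), so a fully self-contained write-up would need to prove or explicitly cite it.
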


Proving the existence of a regular simplex in $\R^n$ is equivalent to proving the existence of a minimal positive basis with uniform angles in $\R^n$, which is established in \cite{Alberto04}. Moreover, the work \cite{Lazebnik} establishes the existence of a regular simplex by an induction argument.

In this work we are considering the set-up where $x_0$ is the centroid of the regular simplex in $\R^n$ with vertices $x_1,\dots,x_{n+1}$. The arms of the simplex $v_1,\dots,v_{n+1}$ (defined in \eqref{vj}) form a minimal positive basis. (This will be discussed in more detail in the sections that follow.) To make this more concrete, Figure~\ref{Fig_Simplex} shows a regular simplex in $\R^2$.
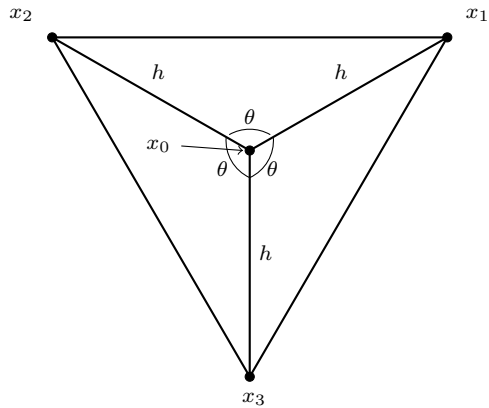
\begin{figure}[h!]\centering
    \begin{tikzpicture}
    [scale=3]
  \draw[thick] (0,0) --(0,-1);
  \draw[thick] (0,0) --(0.8660,0.5);
  \draw[thick] (0,0) --(-0.8660,0.5);
  \draw[thick] (0,-1) -- (0.8660,0.5) -- (-0.8660,0.5) -- (0,-1);
  \draw [fill] (0,0) circle [radius=0.02];
  \draw [fill] (0,-1) circle [radius=0.02];
  \draw [fill] (0.8660,0.5) circle [radius=0.02];
  \draw [fill] (-0.8660,0.5) circle [radius=0.02];
  \draw[->] (-0.3,0.02) -- (-0.03,0);
  \node at (-0.4,0.02) {$x_0$};
  \node at (1.0,0.6) {$x_1$};
  \node at (-1.0,0.6) {$x_2$};
  \node at (0.02,-1.10) {$x_3$};
  \node at (0.07,-0.45) {$h$};
  \node at (0.4,0.35) {$h$};
  \node at (-0.4,0.35) {$h$};
  \draw (0,-0.12) arc (-65:5:0.18);
  \draw (0,-0.12) arc (-115:-185:0.18);
  \draw (-0.09,0.07) arc (-240:-300:0.18);
  \node at (0.1,-0.08) {{\small$\theta$}};
  \node at (-0.12,-0.08) {{\small$\theta$}};
  \node at (0,0.15) {{\small$\theta$}};
\end{tikzpicture}
\caption{A regular simplex in $\R^2$ generated by a minimal positive basis with uniform angles. Points (vertices) $x_1,x_2,x_3$ are affinely independent and their convex hull is the regular simplex, while $x_0$ is the centroid. Each `arm' of the simplex $x_j-x_0$, for $j=1,2,3$ has the same length $h$, and the angle between any two arms is equal.}
\label{Fig_Simplex}
\end{figure}

\subsection{Simplex Gradients}\label{S_simpgrad}

The following defines a simplex gradient.
\begin{definition}[Simplex gradient, Section~2.6 in \cite{Conn09} and its generalization \cite{Regis15}]\label{Def_SimplexGradient}
  When there are $n+2$ (or more) points, $y_1,\dots,y_m\in\R^n$ with $m\geq n+2$, containing a proper subset of affinely independent points, the simplex gradient is defined as the least-squares solution of the linear system
  \begin{equation}\label{newlabel}
    f(y_j) - f(y_{1}) = (y_j-y_1)^Tg, \quad \text{for } j = 2,\dots,m.
  \end{equation}
\end{definition}
This definition depends upon whichever point is labeled $y_1$ and as a consequence, it is sometimes referred to as the simplex gradient \emph{at the point} $y_1$.

In this paper we consider only $n+1$ or $n+2$ points (either with or without the centroid $x_0$). In the case $m=n+1$ the aligned regular simplex gradient is independent of the ordering of the points because the system generated by these points is a determined system. In the case $m=n+2$, $y_1$ is the centroid of the other $n+1$ points and it can be shown (see Section~\ref{S_alternate}) that the system generated by these points is equivalent to a determined system.

Using the results in Sections~\ref{S_Notation}~and~\ref{S_Preliminaries}, \eqref{newlabel} can be rewritten in matrix notation as
  \begin{eqnarray}\label{E_simplexsystem}
    \V^T g = \tfrac1h \dfp.
  \end{eqnarray}

Definition~\ref{Def_SimplexGradient} makes clear that, in the setup used in this work with the $n+2$ points $x_0,\dots,x_{n+1}\in\R^n$, $g$ satisfies the normal equations form of \eqref{E_simplexsystem}:
  \begin{eqnarray}\label{E_SimplexGradientNormalEqns}
    \V\V^T g = \tfrac1h \V\dfp.
  \end{eqnarray}
  For further discussion on simplex gradients in a more general setting, see for example \cite{Conn09}, \cite{Custodio08}, \cite{Regis15}.

\section{Constructing the simplex}\label{S_Simplex}
The central goal of this work is to determine a least squares solution to the system \eqref{E_SimplexGradientNormalEqns} in $\oo(n)$ operations/computations, while maintaining $\oo(n)$ storage for the aligned regular simplex. One cannot hope to achieve this for a \emph{generic} simplex. However, if one can choose the simplex to be a \emph{regular simplex} that is \emph{oriented} in a particular way, then this goal can be achieved. This section is devoted to the construction of an aligned regular simplex that can be stored in $\oo(n)$ and whose gradient can be evaluated in $\oo(n)$ operations.

\subsection{Positive basis with uniform angles}

Several properties of a positive basis with uniform angles are stated now. The description uses several of the concepts already presented in \cite[Chapter~2]{Conn09}.

Consider $n+1$ \emph{normalized} vectors $v_1,\dots,v_{n+1}\in\R^n$, where the angle $\theta$ between any pair of vectors $v_i,v_j$, for $i\neq j$ is equal. It can be shown that (see \cite[Exercise~2.7(4)]{Conn09})
  \begin{eqnarray}\label{vjangle}
     \cos{\theta} = v_i^Tv_j = -\frac1n, \qquad i,j\in\{1,\dots,n+1\},\;\;i\neq j.
  \end{eqnarray}

If \eqref{vj}, \eqref{vnp1} and \eqref{vjangle} hold, then $x_1,\dots,x_{n+1}\in \R^n$ are the vertices of a regular simplex with centroid $x_0$. Thus, we seek to construct a positive basis of $n+1$ normalized vectors $v_1,\dots,v_{n+1}\in \R^n$ such that properties \eqref{vnp1} and \eqref{vjangle} hold.  With \eqref{vjangle} in mind, first the aim is to find a matrix $V$ satisfying (see (2.2) in \cite{Conn09})
  \begin{eqnarray}\label{A}
    A = V^TV= \begin{bmatrix}
      1    & -\frac1n  & \dots &-\frac1n\\
      -\frac1n & 1 &  &\vdots\\
      \vdots &  & \ddots & -\frac1n\\
      -\frac1n & \dots   & -\frac1n & 1\\
    \end{bmatrix}.
  \end{eqnarray}
  From \eqref{A}, one may write
  \begin{eqnarray}\label{Aalphabeta}
    A = V^TV= \left(1+\tfrac1n\right) I - \tfrac1n ee^T= \alpha^2 (I - \beta ee^T),
  \end{eqnarray}
  where
  \begin{eqnarray}\label{alphabeta}
    \alpha\eqdef \sqrt{\frac{n+1}{n}} \qquad \text{and} \qquad \beta \eqdef \frac1{n+1}.
  \end{eqnarray}
    Using \eqref{vnp1} and \eqref{A}, a positive basis with uniform angles exists. In particular, $A$ in \eqref{A} is symmetric and positive definite (see, for example, \cite[pg.20]{Conn09}, \cite{Golub16}) so it has a Cholesky decomposition $A=R^TR$. Taking $V=R$, which is nonsingular, combined with \eqref{vnp1} and applying Lemma~\ref{L_minposbasis}, establishes that $R_+=\mat{R&-Re}$ is a normalized minimal positive basis with uniform angles, as pointed out in \cite[p.20]{Conn09}.
The particular structure of $A$ allows the Cholesky  factor $R$ to be calculated
efficiently.

 There is, however, another factorization of $A$ that comes from the fact that any symmetric positive definite matrix has a \emph{unique} symmetric positive definite square root \cite[p.149]{Golub16}.  We search for a square-root matrix with similar structure to $A$. In particular, let
\begin{eqnarray}\label{V}
    V=\alpha(I-\gamma ee^T),
  \end{eqnarray}
where we must now specify $\gamma\in \R$. Since $A=V^TV=V^2$ it is clear that $\gamma\in \R$ must satisfy
\begin{eqnarray*}
    I-\beta ee^T=(I-\gamma ee^T)^2=I-2\gamma ee^T +n\gamma^2ee^T.
  \end{eqnarray*}
  Equating the coefficients of $ee^T$ one sees that $\gamma$ is a root of
  the quadratic equation
  \begin{eqnarray}\label{E_Quadratic}
    n\gamma^2 -2\gamma + \beta =0,
  \end{eqnarray}
giving two possible solutions:

\begin{eqnarray}\label{gamma}
    \gamma = \frac1n\left(1\pm \frac1{\sqrt{n+1}}\right).
\end{eqnarray}
Letting $\gamma_1,\gamma_2$ denote these two solutions and $V_1,V_2$ the corresponding matrices defined in \eqref{V} it is easy to show that
$V_1=HV_2$ where $H=I-\tfrac2nee^T$ is an elementary Householder reflection
matrix ($V_2$ is the reflection of $V_1$ in the hyperplane through the origin
with normal vector $e$ and vice-versa).

Choosing the negative sign for $\gamma$ in \eqref{gamma} yields the unique positive definite square-root matrix as the following lemma shows.

\begin{lemma}\label{eigV}
    Let $\alpha$, $\beta$ and $\gamma$ be defined in \eqref{alphabeta} and \eqref{gamma}. The matrix $V = \alpha(I - \gamma ee^T)$ is nonsingular. Moreover, $V$ has $n-1$ eigenvalues equal to $\alpha$ and one eigenvalue satisfying
    \begin{equation}
      \lambda_n(V) =
      \begin{cases}
        \tfrac{1}{\sqrt{n}}  & \text{ if }\gamma = \frac1n(1-\sqrt{\beta}),\\
        -\tfrac{1}{\sqrt{n}} & \text{ if } \gamma = \frac1n(1+\sqrt{\beta}).
      \end{cases}
    \end{equation}
  \end{lemma}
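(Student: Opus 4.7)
The plan is to exploit the rank-one structure of $V$. Since $V = \alpha I - (\alpha\gamma) ee^T$ is a scalar multiple of the identity plus a rank-one symmetric perturbation, the eigenvectors split naturally into those parallel to $e$ and those orthogonal to $e$.

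First I would observe that for any $w \in \R^n$ with $e^T w = 0$, we have $(I - \gamma ee^T)w = w$, so $Vw = \alpha w$. Since $e^\perp$ has dimension $n-1$, this already accounts for $n-1$ eigenvalues equal to $\alpha$. Next, $e$ itself is an eigenvector: $(I - \gamma ee^T)e = e - \gamma (e^T e)e = (1 - \gamma n)e$, giving the remaining eigenvalue
\begin{equation*}
  \lambda_n(V) = \alpha(1 - \gamma n).
\end{equation*}

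The third step is routine arithmetic with the two choices of $\gamma$ in \eqref{gamma}. Recalling $\alpha = \sqrt{(n+1)/n}$ and $\sqrt{\beta} = 1/\sqrt{n+1}$, one gets $\gamma n = 1 \mp \sqrt{\beta}$, hence $1 - \gamma n = \pm 1/\sqrt{n+1}$, and so
\begin{equation*}
  \alpha(1 - \gamma n) = \sqrt{\frac{n+1}{n}} \cdot \left(\pm \frac{1}{\sqrt{n+1}}\right) = \pm \frac{1}{\sqrt{n}},
\end{equation*}
where the $+$ sign corresponds to $\gamma = \tfrac{1}{n}(1 - \sqrt{\beta})$ and the $-$ sign to $\gamma = \tfrac{1}{n}(1 + \sqrt{\beta})$, matching the statement.

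Finally, nonsingularity of $V$ follows immediately because every eigenvalue ($\alpha > 0$ with multiplicity $n-1$, and $\pm 1/\sqrt{n} \neq 0$) is nonzero, so $\det V \neq 0$. There is no real obstacle in this argument; the only thing to keep straight is the sign bookkeeping in the final substitution, since the sign of $\lambda_n$ is opposite to the sign chosen in front of $\sqrt{\beta}$ in $\gamma$.
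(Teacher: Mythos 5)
Your proof is correct and follows essentially the same route as the paper's: both arguments exploit the rank-one structure of $\alpha\gamma\, ee^T$ (eigenvalue $\alpha\gamma n$ on $\mathrm{span}\{e\}$, zero on $e^\perp$) together with the spectral shift by $\alpha I$, and the sign bookkeeping in your final substitution matches the lemma. The only cosmetic difference is that you exhibit the eigenvectors explicitly, whereas the paper states the spectrum of $-\alpha\gamma ee^T$ directly.
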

  \begin{proof}
    The matrix $-\alpha\gamma ee^T$ has $n-1$ zero eigenvalues, and one eigenvalue equal to $-\alpha\gamma n$. Further, adding $\alpha I$ to $-\alpha\gamma ee^T$ simply shifts the spectrum by $\alpha$. Therefore, $V$ has $n-1$ eigenvalues equal to $\alpha$, and the remaining eigenvalue is $\alpha(1-\gamma n) \overset{\eqref{gamma}}{=} \alpha(1-(1\pm\sqrt{\beta}))= \mp \alpha\sqrt{\beta} = \mp1/\sqrt{n}.$ Finally, all the eigenvalues are nonzero, so $V$ is nonsingular.
  \end{proof}
  \begin{corollary}
    If $\gamma = \frac1n(1-\sqrt{\beta})$ then $V$ is positive definite.
  \end{corollary}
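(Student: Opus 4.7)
The plan is essentially to read off the conclusion directly from Lemma~\ref{eigV} once symmetry is observed. First I would note that $V = \alpha(I - \gamma ee^T)$ is symmetric, because both $I$ and $ee^T$ are symmetric and $\alpha, \gamma$ are scalars. For a symmetric matrix, positive definiteness is equivalent to all eigenvalues being strictly positive, so the task reduces to checking the sign of each eigenvalue in the list provided by Lemma~\ref{eigV}.

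Next I would examine the two candidate eigenvalues. From \eqref{alphabeta}, $\alpha = \sqrt{(n+1)/n} > 0$, so the eigenvalue $\alpha$ (with multiplicity $n-1$) is positive. For the remaining eigenvalue, Lemma~\ref{eigV} states that choosing $\gamma = \tfrac{1}{n}(1 - \sqrt{\beta})$ yields $\lambda_n(V) = 1/\sqrt{n}$, which is strictly positive. Hence all $n$ eigenvalues of $V$ are positive, and since $V$ is symmetric, this is precisely the definition of positive definiteness.

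There is really no obstacle here; the corollary is a one-line consequence of Lemma~\ref{eigV} together with the trivial observation that $V$ is symmetric. If anything, the only thing worth emphasising in the write-up is why we are allowed to conclude positive definiteness from positivity of the eigenvalues, namely that symmetry of $V$ is what lets us invoke the spectral characterisation (as opposed to just having a matrix with positive spectrum, which in the nonsymmetric case would not suffice).
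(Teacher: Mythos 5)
Your proof is correct and matches the paper's intent exactly: the corollary is stated as an immediate consequence of Lemma~\ref{eigV}, since $V=\alpha(I-\gamma ee^T)$ is manifestly symmetric and, for the choice $\gamma=\tfrac1n(1-\sqrt{\beta})$, all its eigenvalues ($\alpha$ with multiplicity $n-1$ and $1/\sqrt{n}$) are positive. Your extra remark about why symmetry is needed to pass from a positive spectrum to positive definiteness is a sensible point of emphasis but does not change the argument.
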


\begin{lemma}\label{L_alignedwithe}
Let $\alpha$, $\beta$ and $\gamma$ be defined in \eqref{alphabeta} and \eqref{gamma} and let $V$ be defined in \eqref{V}. Then
\begin{eqnarray}\label{Ve}
  Ve =
  \begin{cases}
    \tfrac1{\sqrt{n}}e & \text{if } \gamma = \frac1n(1-\sqrt{\beta})\\
    -\tfrac1{\sqrt{n}}e & \text{if } \gamma = \frac1n(1+\sqrt{\beta}).
  \end{cases}
\end{eqnarray}
Moreover,
\begin{eqnarray}\label{VeeV}
  Vee^TV^T = \tfrac1n ee^T.
\end{eqnarray}
\end{lemma}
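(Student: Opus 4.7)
The plan is to obtain both identities by direct computation, exploiting the fact that $e$ is essentially an eigenvector of the rank-one perturbation $I - \gamma e e^T$.

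First I would compute $Ve$. Expanding the definition,
\[
  Ve = \alpha(I - \gamma e e^T)e = \alpha\bigl(e - \gamma (e^T e) e\bigr) = \alpha(1 - \gamma n)e,
\]
where I used $e^T e = n$. Now I substitute the two choices of $\gamma$ from \eqref{gamma}. For $\gamma = \tfrac{1}{n}(1-\sqrt{\beta})$ we get $1 - \gamma n = \sqrt{\beta} = 1/\sqrt{n+1}$, so
\[
  \alpha(1-\gamma n) = \sqrt{\tfrac{n+1}{n}}\cdot \tfrac{1}{\sqrt{n+1}} = \tfrac{1}{\sqrt{n}}.
\]
The case $\gamma = \tfrac{1}{n}(1+\sqrt{\beta})$ is identical except for a sign flip, giving $-1/\sqrt{n}$. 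This establishes \eqref{Ve}, and as a consistency check it agrees with Lemma~\ref{eigV}, which already identified $\pm 1/\sqrt{n}$ as the unique non-$\alpha$ eigenvalue of $V$ with $e$ as its eigenvector (up to scaling).

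For the second identity, I would observe that $V$ is symmetric (it is a real symmetric linear combination of $I$ and $ee^T$), so $V^T = V$ and hence
\[
  Vee^TV^T = (Ve)(Ve)^T.
\]
Plugging in $Ve = \pm \tfrac{1}{\sqrt{n}}e$ from the first part, the two signs multiply to a plus, and
\[
  (Ve)(Ve)^T = \tfrac{1}{n}\, ee^T,
\]
which is \eqref{VeeV}.

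There is no real obstacle here; the only thing to be careful about is tracking the signs arising from the two choices of $\gamma$ and confirming that the final product $Vee^TV^T$ is sign-independent (which it must be, since $(Ve)(Ve)^T$ is quadratic in $Ve$). The proof therefore reduces to two short lines of algebra once one notices the eigenvector structure.
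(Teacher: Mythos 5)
Your proof is correct and follows essentially the same route as the paper: compute $Ve=\alpha(1-n\gamma)e$ directly and substitute the two roots for $\gamma$, then note that \eqref{VeeV} follows since $Vee^TV^T=(Ve)(Ve)^T$ and the sign squares away. (The appeal to symmetry of $V$ is harmless but unnecessary, as $e^TV^T=(Ve)^T$ holds for any $V$.)
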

\begin{proof}
  With some abuse of notation, for $\gamma = \frac1n(1\pm \sqrt{\beta})$ we have
  \begin{eqnarray*}
    Ve &\overset{\eqref{V}}{=}& \alpha(I-\gamma ee^T)e =\alpha(1 - n\gamma) e =  \mp\tfrac1{\sqrt{n}} e,
  \end{eqnarray*}
  which proves \eqref{Ve}. The result \eqref{VeeV} follows immediately.
\end{proof}

Now we present the main result of this subsection, which shows that the choice $V$ in \eqref{V} leads to a minimal positive basis with uniform angles.

\begin{theorem}\label{T_minposbasis}
  Let $\alpha$, $\beta$, $\gamma$ and $V$ be defined in \eqref{alphabeta}, \eqref{gamma} and \eqref{V} respectively. Then $\V=\mat{V&-Ve}$ is a minimal positive basis with uniform angles.
\end{theorem}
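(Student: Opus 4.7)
The plan is to split the theorem into its two assertions and handle each separately, leaning heavily on results already established in the excerpt.

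First, to show that $\V=\mat{V&-Ve}$ is a \emph{minimal positive basis}, I would simply invoke Lemma~\ref{eigV} together with Lemma~\ref{L_minposbasis}(ii). Lemma~\ref{eigV} says that all $n$ eigenvalues of $V$ are nonzero (namely $\alpha$ with multiplicity $n-1$ and $\pm 1/\sqrt{n}$), hence $V$ is nonsingular, and Lemma~\ref{L_minposbasis}(ii) then applies verbatim with $W=V$ to conclude that $\mat{V&-Ve}$ is a minimal positive basis for $\R^n$. Nothing further is needed for this part.

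Second, to verify \emph{uniform angles}, I need to check that the inner product between any two distinct columns of $\V$ is the same constant, and that every column has unit norm. Write $v_1,\dots,v_n$ for the columns of $V$ and set $v_{n+1}\eqdef -Ve$. The relation $V^TV=A$ in \eqref{A}, which was the starting identity imposed on $V$, immediately gives $\|v_j\|_2^2 = A_{jj}=1$ and $v_i^Tv_j = A_{ij} = -1/n$ for $i\neq j$ in $\{1,\dots,n\}$. It remains to handle the column $v_{n+1}$. For its norm, a direct calculation using \eqref{Aalphabeta} gives
\begin{equation*}
  \|v_{n+1}\|_2^2 = e^TV^TVe = e^TAe = \tfrac{n+1}{n}\,e^Te - \tfrac1n(e^Te)^2 = (n+1)-n = 1.
\end{equation*}
For the cross inner products, for each $i\in\{1,\dots,n\}$,
\begin{equation*}
  v_i^T v_{n+1} = -e_i^T V^T V e = -e_i^T A e = -\Bigl(1 - \tfrac{n-1}{n}\Bigr) = -\tfrac1n,
\end{equation*}
since row $i$ of $A$ contains a single $1$ and $n-1$ entries equal to $-1/n$. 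All pairwise inner products among the $n+1$ columns of $\V$ therefore equal $-1/n$, which by \eqref{vjangle} is exactly the condition for uniform angles.

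Neither step presents any real obstacle; the ``hard part'' was done earlier by constructing $V$ so that $V^TV=A$ and showing in Lemma~\ref{eigV} that the resulting $V$ is nonsingular. With those in hand, the minimal-positive-basis property follows from one citation, and the uniform-angle property reduces to two short inner-product computations using $Ve$ (which Lemma~\ref{L_alignedwithe} even evaluates explicitly, though that explicit form is not required here).
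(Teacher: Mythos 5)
Your proof is correct and follows essentially the same route as the paper: nonsingularity of $V$ from Lemma~\ref{eigV} plus Lemma~\ref{L_minposbasis}(ii) for the minimal-positive-basis part, and a computation of the pairwise inner products of the columns of $\V$ (equivalently, of $\V^T\V$) for the uniform-angle part. The only cosmetic difference is that you evaluate the entries involving $v_{n+1}=-Ve$ directly from $e^TAe$ and $e_i^TAe$, whereas the paper routes the same computation through the explicit formula $Ve=\pm\tfrac{1}{\sqrt{n}}e$ of Lemma~\ref{L_alignedwithe}; both give the same matrix.
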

\begin{proof}
  By Lemma~\ref{eigV}, $V$ is nonsingular, so applying Lemma~\ref{L_minposbasis} shows that $\V$ is a minimal positive basis.

  It remains to show the uniform angles property. By construction, $V$ defined in \eqref{V} satisfies \eqref{A}. Then
  \begin{eqnarray*}
    \V^T\V = \mat{V^T\\-(Ve)^T}\mat{V&-Ve} = \mat{V^2&-V^2e\\ -(V^2e)^T & e^TV^2e}\in \R^{(n+1)\times (n+1)}.
  \end{eqnarray*}
  Furthermore, by \eqref{Ve},
\begin{eqnarray*}
   V^2e = V(Ve) =  V\Big(\frac1{\sqrt{n}}e\Big)
   =\frac1n e,
  \end{eqnarray*}
   and $e^TV^2e = e^Te/n = 1$ so that
   \begin{eqnarray}
     \V^T\V= \begin{bmatrix}
      1    & -\frac1n  & \dots &-\frac1n\\
      -\frac1n & 1 &  &\vdots\\
      \vdots &  & \ddots & -\frac1n\\
      -\frac1n & \dots   & -\frac1n & 1\\
    \end{bmatrix}\in \R^{(n+1)\times (n+1)}.
   \end{eqnarray}
Hence, $v_1,\dots,v_{n+1}$ also satisfy \eqref{vjangle}, so the positive basis has uniform angles.
\end{proof}
Although not explicitly stated, the positive basis derived from \eqref{V} has essentially been used (with a scaling factor and origin shift), for setting up initial regular simplexes by several authors (\cite{Dennis+T91}, \cite[p. 267]{Belegundu99}, \cite[p. 80]{Jacoby74} )
\begin{remark}
  Lemma~\ref{L_alignedwithe} and Theorem~\ref{T_minposbasis} explain why the terminology `aligned regular simplex' is used in this work. Theorem~\ref{T_minposbasis} shows that $\V$ is a minimal positive basis with uniform angles, so the resulting simplex is regular. Moreover, Lemma~\ref{L_alignedwithe} demonstrates that $Ve$, which is an `arm' of the simplex (recall Figure~\ref{Fig_Simplex}), is always proportional to $e$; one arm of the regular simplex is always aligned with the vector of all ones. Finally, the choice of $\gamma$ simply dictates whether the simplex arm is oriented in the `$+e$' or `$-e$' direction.
\end{remark}

\subsection{Weight attached to centroid}\label{S_Weight}

Here we present a general result regarding the weight attached to the centroid  when solving the normal equations defining a least squares solution in linear regression. It is is well known to linear regression analysts in statistics that a  linear (affine) function, fitted by least squares, passes through the centroid of the data points.  Adding an extra `observation' at the centroid does not affect the solution for the normal of the fitted affine function --- it does, of course, affect the offset. This is irrespective of the number of data points
but has important consequences for calculating the simplex gradient at the centroid when
fitting an affine function to $n+2$ data points in $\R^n$. The following result generalises to any least squares system with $p>n$ data points ($V$ need not be a normalized invertible matrix), however, we avoid introducing extra notation by focusing on the result relating to simplex gradients.

In order to define a general simplex the following equations are used:
\begin{equation}\label{eq:nuj}
   \nu_{n+1} = -\sum_{j=1}^n \nu_j,\quad \text{where} \quad \nu_j=x_j-x_0\;\;\text{for}\;\;j=1,\dots,n+1.
\end{equation}
The vertices of the simplex are $\{x_i,\quad i=1,\dots,n+1\}$ and its centroid is $x_0$.
Here, it is \emph{not} assumed that $\|\nu_j\|_2 =1$ for all $j$, so the simplex is \emph{not necessarily} a regular simplex (i.e., \eqref{vj} need not hold).

\begin{theorem}\label{T_Weightless}
Let $\dfp$ and $\df$ be defined in \eqref{fvec} and \eqref{fvecex},  respectively, where $f_0,\dots,f_{n+1}$ are the function values at the points $x_0,\dots,x_{n+1}$. Let $V$ and $\V$ be structured as in \eqref{GenericV} and \eqref{Vplus}, respectively, but using the points $\nu_1,\dots,\nu_{n+1}$ defined in \eqref{eq:nuj}. Then the simplex gradient $g$ in \eqref{E_SimplexGradientNormalEqns} is independent of $f_0$.
\end{theorem}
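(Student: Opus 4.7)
The plan is to show that $f_0$ vanishes from the right-hand side of the normal equations, so that the system whose solution is $g$ contains no $f_0$ dependence at all.

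The first step is to translate the centroid hypothesis $\nu_{n+1} = -\sum_{j=1}^{n} \nu_j$ from \eqref{eq:nuj} into a statement about the matrix $V_+$. Since $V_+ = \mat{V & -Ve}$ has columns $\nu_1, \ldots, \nu_n, \nu_{n+1}$, multiplying on the right by the $(n+1)$-vector of ones gives $V_+ e = \sum_{j=1}^{n+1} \nu_j = \sum_{j=1}^{n}\nu_j + \nu_{n+1} = 0$. This is the one key observation driving the whole proof.

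Next, I would use the definition \eqref{fvecex} to split $\delta\mathbf{f}_+ = \mathbf{f}_+ - f_0 e$, and compute
\begin{equation*}
    V_+ \delta\mathbf{f}_+ \;=\; V_+ \mathbf{f}_+ \;-\; f_0\, V_+ e \;=\; V_+ \mathbf{f}_+,
\end{equation*}
using the identity from the previous step. Thus the right-hand side of the normal equations \eqref{E_SimplexGradientNormalEqns} reduces to (a scalar multiple of) $V_+ \mathbf{f}_+$, which involves only $f_1,\dots,f_{n+1}$.

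Finally, since the coefficient matrix $V_+ V_+^T$ depends only on the simplex geometry (the vectors $\nu_j$) and not on any function values, the normal equations become $V_+V_+^T g = \tfrac{1}{h} V_+ \mathbf{f}_+$, whose solution set is completely independent of $f_0$. There is no real obstacle here beyond spotting that the centroid condition is exactly the statement $V_+ e = 0$; once that is in hand, the rest is a one-line calculation. It is worth remarking in the write-up that the argument never uses regularity of the simplex or normalization of the $\nu_j$, which is why the result applies to any simplex whose `centre' point $x_0$ is the centroid of the remaining vertices, as claimed in the statement.
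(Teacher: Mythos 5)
Your proof is correct and is essentially the paper's argument: the paper also notes that $(\V\V^T)^{-1}$ involves no function values and then expands $\V\dfp = V\df-(f_{n+1}-f_0)Ve = V(\ff-f_{n+1}e)$, where the cancellation of the $f_0$ terms is exactly your identity $\V e=0$ in disguise (and your final expression $\V\ffp$ equals the paper's $V(\ff-f_{n+1}e)$). Your packaging via the single observation $\V e=\sum_j\nu_j=0$ is a slightly cleaner way to exhibit the same cancellation, but the mathematical content is identical.
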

\begin{proof}
 Clearly, the term $(\V\V^T)^{-1}$ in \eqref{E_SimplexGradientNormalEqns} does not involve $f_0$. Now,
 \begin{eqnarray}\label{eq:VpvsV}
  \V\dfp &=& V\df -(f_{n+1}-f_0) Ve\notag\\
  &=& V\ff - f_0Ve -f_{n+1}Ve + f_0Ve\notag\\
  &=& V(\ff -f_{n+1}e).
\end{eqnarray}
\qed
\end{proof}

Theorem~\ref{T_Weightless} shows that, if the relationship \eqref{vnp1} holds (equivalently, the summation property in \eqref{eq:nuj}), and $\V$ is a minimal positive basis, then the function value at the centroid $x_0$ is not used when computing the simplex gradient. That is, the weight attached to $x_0$ is zero when calculating a simplex gradient.

\subsection{Aligned regular simplex gradient}

Here we state and prove the main result of this work, that the aligned regular simplex gradient can be computed in $\oo(n)$ operations. We begin with the following result.

\begin{lemma}\label{L_VplusVplusT}
  Let $\alpha$, $\beta$ and $\gamma$ be defined in \eqref{alphabeta} and \eqref{gamma} and let $V$ be defined in \eqref{V}. Then, for $\V$ defined in \eqref{Vplus},
  \begin{eqnarray}\label{E_VpVp}
    \V\V^T = \alpha^2 I.
  \end{eqnarray}
  \end{lemma}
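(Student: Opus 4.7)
The plan is to expand $\V\V^T$ directly from the block form of $\V$ in \eqref{Vplus} and then reduce each piece using the identities already established earlier in the section.

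First I would write
\begin{equation*}
\V\V^T = \mat{V & -Ve}\mat{V^T \\ -(Ve)^T} = VV^T + (Ve)(Ve)^T = VV^T + Vee^TV^T.
\end{equation*}
Since $V=\alpha(I-\gamma ee^T)$ is symmetric, $VV^T = V^2$, which by construction equals $A$ in \eqref{A}; using the clean form \eqref{Aalphabeta} this gives
\begin{equation*}
VV^T = \alpha^2(I-\beta ee^T).
\end{equation*}
The second piece $Vee^TV^T$ is exactly \eqref{VeeV} from Lemma~\ref{L_alignedwithe}, so it equals $\tfrac1n ee^T$.

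Combining the two pieces,
\begin{equation*}
\V\V^T = \alpha^2 I - \alpha^2\beta\, ee^T + \tfrac1n ee^T.
\end{equation*}
The final step is the numerical check $\alpha^2\beta = \tfrac{n+1}{n}\cdot\tfrac{1}{n+1} = \tfrac1n$ from \eqref{alphabeta}, which makes the two rank-one corrections cancel exactly and leaves $\V\V^T=\alpha^2 I$. There is no real obstacle here: the work has all been set up by Lemma~\ref{L_alignedwithe} and the identity $V^2=A$, and the only thing to watch is the sign/scale bookkeeping so that the $ee^T$ terms cancel. A nice conceptual takeaway worth noting in passing is that this means the rows of $\V$ are orthogonal with common squared norm $\alpha^2$, which is precisely what makes $(\V\V^T)^{-1}$ trivial and underlies the forthcoming $\oo(n)$ gradient computation.
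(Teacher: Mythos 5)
Your proposal is correct and follows essentially the same route as the paper: expand the block product to get $VV^T + Vee^TV^T$, reduce the first term via $V^2=\alpha^2(I-\beta ee^T)$ and the second via \eqref{VeeV}, and observe that the two rank-one terms cancel since $\alpha^2\beta=\tfrac1n$. The only cosmetic difference is that you verify the cancellation by direct substitution from \eqref{alphabeta}, whereas the paper cites \eqref{E_Quadratic}; both are fine.
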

  \begin{proof}
  Note that
  \begin{eqnarray*}
    \V\V^T &=& \mat{V & -Ve}\mat{V^T\\ -(Ve)^T}\\
    &=& VV^T + Vee^TV^T\\
    &\overset{\eqref{VeeV}}{=}& V^2 + \tfrac1n ee^T\\
    &\overset{\eqref{Aalphabeta}}{=}& \alpha^2(I - \beta ee^T) + \tfrac1n ee^T\\
    &=& \alpha^2I - (\alpha^2\beta - \tfrac1n)ee^T\\
    &\overset{{\eqref{E_Quadratic}}}{=}& \alpha^2 I.
  \end{eqnarray*}
  \qed
  \end{proof}
Our main result follows, which shows that the aligned regular simplex gradient can be computed in $\oo(n)$ operations.
  \begin{theorem}\label{T_Ongrad}
    Let $\alpha$, $\beta$ and $\gamma$ be defined in \eqref{alphabeta} and \eqref{gamma}, respectively, let $V$ and $\V$ be defined in \eqref{V} and \eqref{Vplus} respectively, and let
    \begin{eqnarray}\label{E_c1c2}
      c_1 = \frac1{h \alpha} \qquad \text{and} \qquad c_2 = c_1\left((\gamma n -1)f_{n+1} -\gamma e^T\ff\right).
    \end{eqnarray}
    Then, the aligned regular simplex gradient g is computed by
    \begin{eqnarray}\label{E_Ong}
      g = c_1 \ff + c_2 e,
    \end{eqnarray}
    which is an $\oo(n)$ computation.
  \end{theorem}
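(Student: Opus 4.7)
The plan is to start from the normal equations \eqref{E_SimplexGradientNormalEqns}, substitute the two results already established (Lemma~\ref{L_VplusVplusT} and the key identity in the proof of Theorem~\ref{T_Weightless}), and then simply expand using the explicit form of $V$ from \eqref{V}. All the hard algebraic work has already been done in the preceding lemmas, so the proof reduces to a bookkeeping calculation.

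First I would invoke Lemma~\ref{L_VplusVplusT}, which gives $\V\V^T = \alpha^2 I$. This is the crucial step that removes the usual $\oo(n^3)$ linear algebra: the normal equations matrix is a scalar multiple of the identity, so \eqref{E_SimplexGradientNormalEqns} can be inverted trivially to give
\begin{equation*}
  g = \frac{1}{h\alpha^2}\,\V\dfp.
\end{equation*}
Next I would apply the identity \eqref{eq:VpvsV} from the proof of Theorem~\ref{T_Weightless} to rewrite $\V\dfp = V(\ff - f_{n+1}e)$, thereby eliminating $f_0$ from the expression. Together these give $g = \tfrac{1}{h\alpha^2}\,V(\ff - f_{n+1}e)$.

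The remaining step is to substitute $V = \alpha(I - \gamma ee^T)$ from \eqref{V} and expand. Using $e^T(\ff - f_{n+1}e) = e^T\ff - n f_{n+1}$ (since $e\in\R^n$), one gets
\begin{equation*}
  V(\ff - f_{n+1}e) = \alpha\bigl(\ff - f_{n+1}e - \gamma(e^T\ff - n f_{n+1})e\bigr)
  = \alpha\bigl(\ff + ((\gamma n - 1)f_{n+1} - \gamma e^T\ff)\,e\bigr).
\end{equation*}
Dividing by $h\alpha^2$ and using the definitions of $c_1$ and $c_2$ in \eqref{E_c1c2} yields exactly $g = c_1\ff + c_2 e$, as claimed.

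Finally, for the complexity count, I would observe that the inner product $e^T\ff$ requires $n-1$ additions, the scalar $c_2$ is then formed with $\oo(1)$ further operations, and the vector update $c_1\ff + c_2 e$ costs $\oo(n)$ (one scaling plus one componentwise shift). Thus the total housekeeping cost of assembling $g$ is $\oo(n)$. I do not anticipate any serious obstacle here; the only mild subtlety is getting the sign of $(\gamma n - 1)$ correct in the expansion, which is a direct consequence of the simplification $-f_{n+1} + \gamma n f_{n+1} = (\gamma n - 1)f_{n+1}$.
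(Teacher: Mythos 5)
Your proposal is correct and follows exactly the same route as the paper's proof: invert the normal equations via Lemma~\ref{L_VplusVplusT}, eliminate $f_0$ via the identity \eqref{eq:VpvsV}, then expand $V = \alpha(I-\gamma ee^T)$ to read off $c_1$ and $c_2$. The algebra and the $\oo(n)$ operation count both check out.
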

  \begin{proof}
  We have
  \begin{eqnarray*}\label{g}
     g &\overset{\eqref{E_SimplexGradientNormalEqns}}{=}& \frac1{h}(\V\V^T)^{-1} \V \dfp\\
     &\overset{{\rm Lemma}~\ref{L_VplusVplusT}}{=}&\frac1{h\alpha^2} \V \dfp \\
     &\overset{\eqref{eq:VpvsV}}{=}& \frac1{h\alpha^2} V(\ff -f_{n+1}e)\\
     &\overset{\eqref{V}}{=}& \frac1{h\alpha} (I-\gamma ee^T)(\ff -f_{n+1}e)\\
     &=& \frac1{h\alpha} (\ff -f_{n+1}e -\gamma (e^T\ff)e +\gamma f_{n+1}n e)\\
     &=& \frac1{h\alpha} (\ff  +((\gamma n -1)f_{n+1} -\gamma (e^T\ff))e).
  \end{eqnarray*}
  Note that the gradient is simply the sum of two (scaled) vectors, which is an $\oo(n)$ computation (see, for example \cite[p.3]{Watkins10}). \qed
  \end{proof}

Theorem~\ref{T_Ongrad} shows that the gradient of the aligned regular simplex can be expressed very simply as a weighted sum of the function values (measured at the vertices of the simplex) and a constant vector. Thus, it is very cheap to obtain the simplex gradient once function values have been calculated.

These results also demonstrate that using this particular simplex leads to efficiencies in terms of memory requirements. Neither the vertices of the simplex $x_1\dots,x_{n+1}$, nor the arms of the simplex $v_1,\dots,v_{n+1}$, appear in the calculation of the aligned regular simplex gradient. All that is needed is the function values computed at the vertices of the simplex. Note that the vertices of the simplex need not be stored because they can be computed easily on-the-fly as follows. Recall that $V=\alpha(I-\gamma ee^T)$ \eqref{V}. Therefore, each arm of the simplex is
\begin{equation}\label{vjcheap}
  v_j = \alpha(e_j - \gamma e),
\end{equation}
where $e_j$ is the $j$th column of $I$. The $j$th vertex of the simplex is recovered via
\begin{equation}\label{xjcheap}
  x_j \overset{\eqref{vj}}{=} x_0 + h v_j \overset{\eqref{vjcheap}}{=} x_0 + h \alpha(e_j - \gamma e) = (x_0 - h\alpha \gamma e) + h\alpha e_j.
\end{equation}
Expression \eqref{xjcheap} shows that $x_j$ is simply the sum of a constant vector $(x_0 - h\alpha \gamma e)$ whose $j$th component has been modified by $h\alpha$. The only quantities necessary to uniquely determine each vertex are $x_0$, $h$ and $n$. To compute the aligned regular simplex gradient, the $j$th vertex can be generated (via \eqref{xjcheap}), the function value $f_j$ evaluated and stored in $\ff$, and subsequently, the vertex can be discarded. This confirms that the storage requirements for the aligned regular simplex gradient are $\oo(n)$.

\subsection{An alternative formulation}\label{S_alternate}

In Section \ref{S_Weight} it was shown that the weight attached to the centroid is zero so that only the function values at the vertices of the
simplex feature in the regular simplex gradient calculation. But $n+1$
affinely independent points in $\R^n$ define a unique interpolating
affine function with constant gradient and this must, therefore, coincide
with the definition of the simplex gradient defined by the $n+2$ points
used in the least-squares solution \eqref{E_SimplexGradientNormalEqns}. This means that the regular simplex gradient could also be calculated as the solution to
the square system of equations
\begin{eqnarray}\label{E_NoCentroid}
(x_j-x_{n+1})^Tg=\left(f_j-f_{n+1}\right), j=1,\dots,n.
\end{eqnarray}
It is not immediately obvious that this is an equivalent formulation.
To show this equivalence algebraically we use
the identity
$x_j-x_{n+1}=x_j-x_0 -(x_{n+1}-x_0) = h(v_j-v_{n+1})$, and the definition
of $V$ \eqref{V} and $v_{n+1}$ \eqref{vnp1}.  The linear system of equations (\ref{E_NoCentroid}) can then be rewritten
\begin{eqnarray*}    
h(v_j-v_{n+1})^Tg=\left(f_j-f_{n+1}\right), j=1,\dots,n.
\end{eqnarray*}
or in matrix form (after dividing by $h$),
\begin{eqnarray*}
(V+Vee^T)^Tg  = (V+ee^TV)g = \tfrac{1}{h}(\ff-f_{n+1}e).
\end{eqnarray*}
Premultiplying by the invertible matrix $V$ then gives
\begin{eqnarray}\label{E_NoCentroidMV}
(V^2 +Vee^TV)g &=& \tfrac{1}{h}V(\ff-f_{n+1}e).
\end{eqnarray}
Lemma~\ref{L_VplusVplusT} showed that $(V^2+Vee^TV) =\alpha^2I$, and it is then clear that solving equation (\ref{E_NoCentroidMV}) is equivalent to  finding the solution of the normal equations \eqref{E_SimplexGradientNormalEqns} by the method described in the previous section.

\begin{remark}
  We remark that a \emph{linear model} is being used throughout this work, so an affine function is fitted through the $n+1$ simplex vertices, and the simplex gradient is the gradient of the affine function. Furthermore, note that if the centroid $x_0$ is included in the calculation of the simplex gradient at $x_0$, then the offset of the affine function is affected, but this \emph{does not affect the gradient}, i.e., the simplex gradient at the centroid is the same as the simplex gradient at any vertex when the centroid is not included. (If the simplex gradient is calculated at $x_j , j\ne 0$, using the $n+2$ points then the simplex gradient will be affected.) However,
  inclusion of the centroid does simplify the derivation of error bounds as is now shown

\end{remark}

\subsection{Error bounds}

Here we state explicit bounds on the error in the regular simplex gradient, compared with the analytic gradient. First we give the following result providing an error bound for the aligned regular simplex gradient at the centroid $x_0$ and follow with an extension giving an error bound at any vertex.

\begin{theorem}\label{errorx0}
  Let $x_0$ be the centroid of the aligned regular simplex with radius $h>0$  and vertices $x_j=x_0 +hv_j,\quad j=1,2,\dots,n+1.$
  Assume that $f$ is continuously differentiable in an open domain $\Omega$ containing $B(x_0;h)$ and $\nabla f$ is Lipschitz continuous in $\Omega$ with constant $L>0$.
  Then, $g$, obtained by solving the system of linear equations (\ref{E_SimplexGradientNormalEqns}), satisfies the error bound
  \begin{equation}\label{eq:genbound}
    \|\nabla f(x_0) - g\|_2 \leq \tfrac12L h\sqrt{n}.
  \end{equation}
\end{theorem}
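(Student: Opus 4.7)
The plan is to combine a Taylor-expansion residual argument with the clean identity $V_+V_+^T = \alpha^2 I$ established in Lemma~\ref{L_VplusVplusT}.

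First I would apply Taylor's theorem to each vertex. For each $j \in \{1,\ldots,n+1\}$, since $x_j = x_0 + hv_j$ with $\|v_j\|_2 = 1$ and $\nabla f$ is $L$-Lipschitz on $\Omega \supseteq B(x_0;h)$, the remainder
\begin{equation*}
  r_j \eqdef f_j - f_0 - h v_j^T \nabla f(x_0)
\end{equation*}
admits the bound $|r_j| \leq \tfrac12 L h^2$ (standard integral form of the remainder, using $\|v_j\|_2 = 1$). Collecting these into a vector $r \in \R^{n+1}$ and recalling the definition of $\V$ in \eqref{Vplus} gives the identity
\begin{equation*}
  \V^T \nabla f(x_0) = \tfrac{1}{h}(\dfp - r),
\end{equation*}
which should be compared with the defining equation $\V^T g = \tfrac{1}{h}\dfp$ that $g$ satisfies in the least-squares sense via the normal equations \eqref{E_SimplexGradientNormalEqns}.

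Next I would subtract: the normal equations give $\V\V^T g = \tfrac{1}{h}\V\dfp$, while premultiplying the Taylor identity by $\V$ gives $\V\V^T \nabla f(x_0) = \tfrac{1}{h}\V\dfp - \tfrac{1}{h}\V r$. Subtracting and invoking Lemma~\ref{L_VplusVplusT} to replace $\V\V^T$ by $\alpha^2 I$ yields the compact expression
\begin{equation*}
  g - \nabla f(x_0) = \tfrac{1}{h\alpha^2}\V r.
\end{equation*}
Taking norms and using the fact that the singular values of $\V$ are all equal to $\alpha$ (an immediate consequence of $\V\V^T = \alpha^2 I$) gives $\|\V r\|_2 \leq \alpha\|r\|_2$. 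Combined with the componentwise bound $|r_j| \leq \tfrac12 L h^2$, which implies $\|r\|_2 \leq \tfrac12 L h^2 \sqrt{n+1}$, one obtains
\begin{equation*}
  \|g - \nabla f(x_0)\|_2 \leq \tfrac{1}{h\alpha^2} \cdot \alpha \cdot \tfrac12 L h^2 \sqrt{n+1} = \tfrac{L h \sqrt{n+1}}{2\alpha}.
\end{equation*}
Substituting $\alpha = \sqrt{(n+1)/n}$ from \eqref{alphabeta} simplifies the right-hand side to exactly $\tfrac12 L h \sqrt{n}$, establishing \eqref{eq:genbound}.

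I do not anticipate a serious obstacle; the main points to be careful about are (i) keeping the Taylor remainder bound clean by exploiting $\|v_j\|_2 = 1$, and (ii) correctly identifying the singular values of the rectangular matrix $\V$ from Lemma~\ref{L_VplusVplusT} so that $\|\V r\|_2 \leq \alpha\|r\|_2$ is applied with the sharp constant. The entire derivation hinges on the remarkable simplification $\V\V^T = \alpha^2 I$ unique to the aligned regular simplex, which both trivializes the inversion of the normal equations and yields the tight spectral norm bound needed for the constant $\tfrac12\sqrt{n}$.
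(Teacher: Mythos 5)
Your proof is correct and follows essentially the same route as the paper: both bound the componentwise Taylor residual $r_j = f_j - f_0 - hv_j^T\nabla f(x_0)$ by $\tfrac12 Lh^2$ via the integral mean value theorem, pass through the normal equations using $\V\V^T=\alpha^2 I$ from Lemma~\ref{L_VplusVplusT}, and use $\|\V\|_2=\alpha$ together with $\alpha=\sqrt{(n+1)/n}$ to convert $\sqrt{n+1}/\alpha$ into $\sqrt{n}$. No gaps.
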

\begin{proof}
Using the normal equations  (\ref{E_SimplexGradientNormalEqns}) defining $g$  we can write \begin{equation}\label{eq:normalext}
    V_+V_+^T\left(g-\nabla f(x_0)\right) =
         \tfrac{1}{h}V_+\left(\dfp - hV_+^T\nabla f(x_0)\right).
  \end{equation}
The integral form of the mean value theorem provides the identity \begin{eqnarray*}\label{eq:MVT}
  f_j - f_0 =
         \int_0^1(x_j-x_0)^T\nabla f\left(x_0+t(x_j-x_0)\right) dt,
                 \quad j=1,\dots,n+1.
\end{eqnarray*}
Therefore, the $j$th component of the vector in brackets on the right-hand-side of equation (\ref{eq:normalext}) is \begin{eqnarray*}\label{eq:jcomp}
  \left(\dfp - hV_+^T\nabla f(x_0)\right)_j
            &=&  f_j - f_0 -(x_j-x_0)^T\nabla f(x_0), \\
            &=& (x_j-x_o)^T\int_0^1\left( \nabla f(x_0+t(x_j-x_0)) - \nabla f(x_0)\right) dt, \\
            &\le& \|x_j - x_0\|_2 \int_0^1L\|t(x_j-x_0\|dt, \\
            &=& L\|x_j-x_0\|_2^2\int_0^1tdt,  \\
            &=& \tfrac12L h^2, \quad j=1,\dots,n+1, \end{eqnarray*} which provides the bound \begin{equation}\label{eq:rhsbnd} \| \dfp - hV_+^T\nabla f(x_0) \|_2 \le \tfrac12L h^2\sqrt{n+1}.
\end{equation}
Because $V_+V_+^T=\alpha^2I$  equation \eqref{eq:normalext} and the bound \eqref{eq:rhsbnd} lead to the inequality
\begin{equation}\label{eq:lhsbnd} \alpha^2\|g-\nabla f(x_0)\|_2 \le \tfrac12 L h \sqrt{n+1}\|V_+\|_2.
\end{equation}
By Lemma~\ref{eigV}, $\|\V\|_2 = \alpha,$ so
\begin{eqnarray*}
  \|\nabla f(x_0) - g\|_2 \leq \frac{1}{2\alpha} h L \sqrt{n+1}.
\end{eqnarray*}
The definition of $\alpha$ in \eqref{alphabeta} gives the required result.\qed
\end{proof}

An error bound at any vertex $x_j$, $j=1,\dots,n+1$, of the regular simplex is then  easily derived from the Lipschitz continuity of the gradient of $f$ and the triangle inequality.
\begin{eqnarray*}
  \|\nabla f(x_j) - g\|_2
    \leq \|\nabla f(x_j)-\nabla f(x_0)\|_2 + \|\nabla f(x_0)-g\|_2
    \leq \left(1+\tfrac12\sqrt{n}\right)L h.
\end{eqnarray*}

\section{Extensions}\label{S_extensions}

In this section we describe several extensions of the work presented so far. In particular, we show that a regular simplex gradient, where the simplex is arbitrarily oriented, can be computed in $\oo(n^2)$ operations, we show that one can easily construct a regular simplex with integer entries when $n+1$ is a perfect square, and we also show that it is computationally inexpensive to calculate an $\oo(h^2)$ approximation to the gradient using a Richardson extrapolation type approach.

\subsection{A regular simplex gradient in $\oo(n^2)$}\label{S_RegSimp}

In practice, it may not be desirable to use the oriented regular
simplexes discussed so far. However, any regular simplex is related to that particular simplex formed from the aligned positive basis $V_+$ by  a scale factor,
an orientation (orthogonal matrix), a permutation of the columns, and
a shift of origin.
In fact the permutation can be dispensed with because if $P$ is a permutaion matrix 
then
 $$(I-\gamma ee^T)P = P-\gamma ee^TP =
   P(I-\gamma P^Tee^TP)= P(I-\gamma ee^T).$$
Thus, if  $W_+ =\mat{W& -We}$  is \emph{any}  normalized minimal positive basis
with uniform angles then,
$$ W = QVP = (QP)V$$
so that $W$ is linked to $V$ by an orthogonal transformation $QP$
(and hence $W_+$ to any other normalized minimal positive basis
with uniform angles).
These observations enable any regular simplex gradient to be calculated in $\oo(n^2)$ operations.

\begin{theorem}\label{T_General}
Let $Z_+ =\mat{z_1&\dots&z_n&z_{n+1}} =\mat{Z&z_{n+1}} \in \R^{n\times (n+1)}$ be \emph{any} regular simplex with radius $h$ and centroid $z_0$ and let $f_j = f(z_j)$, $j=1,\dots,n+1$ be known function values. Further, let
\begin{equation}\label{eq_u}
  u = \tfrac{1}{\alpha^2h^2}(\ff-f_{n+1}e).
\end{equation}
 Then the simplex gradient is
\begin{eqnarray}\label{E_gradient2}
g= Zu - (e^Tu)z_0,
\end{eqnarray}
which can be calculated in $\oo(n^2)$ floating point operations.
\end{theorem}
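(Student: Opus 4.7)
The plan is to reduce Theorem~\ref{T_General} to the aligned case already handled by Theorem~\ref{T_Ongrad}, using the observation (recorded immediately before the theorem) that any normalized minimal positive basis with uniform angles is related to the basis $\V$ by an orthogonal transformation. Concretely, write $z_j = z_0 + h w_j$ for $j=1,\dots,n+1$; the columns $w_j$ form such a positive basis $W_+ = \mat{W & -We}$, and $W = QV$ for some orthogonal $Q$. Observe also that $W = \tfrac{1}{h}(Z - z_0 e^T)$, where $e$ denotes the $n$-vector of ones.

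The first step is to extend Lemma~\ref{L_VplusVplusT} to show $W_+ W_+^T = \alpha^2 I$ for this $W$. Since $V$ is symmetric, $WW^T = QV^2Q^T$ and $Wee^TW^T = Q(Vee^TV^T)Q^T = \tfrac{1}{n}Qee^TQ^T$ by~\eqref{VeeV}; summing these and reusing the algebra in the proof of Lemma~\ref{L_VplusVplusT} gives $W_+W_+^T = Q(V^2 + \tfrac{1}{n}ee^T)Q^T = \alpha^2 QQ^T = \alpha^2 I$.

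Next, I would instantiate the normal equations~\eqref{E_SimplexGradientNormalEqns} with $W_+$ in place of $\V$, invoke Theorem~\ref{T_Weightless} in the form $W_+\dfp = W(\ff - f_{n+1}e)$ (this is exactly the calculation in~\eqref{eq:VpvsV} with $W$ replacing $V$), and apply the identity just established to obtain $g = \tfrac{1}{\alpha^2 h}\,W(\ff - f_{n+1}e)$. Substituting $W = \tfrac{1}{h}(Z - z_0 e^T)$ and the definition~\eqref{eq_u} of $u$ then yields $g = Zu - (e^Tu)z_0$ after a one-line rearrangement.

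The flop count is transparent: forming $u$ costs $\oo(n)$, the inner product $e^Tu$ is $\oo(n)$, the scaled vector $(e^Tu)z_0$ is $\oo(n)$, and the single matrix-vector product $Zu$ dominates at $\oo(n^2)$. The main thing to be careful about is that the orthogonal factor $Q$ is used only in the proof to transport the identity $\V\V^T = \alpha^2 I$ to the general simplex; it never needs to be constructed or applied explicitly, and the final formula involves only $Z$, $z_0$, and the function values. This is the reason the cost rises from $\oo(n)$ to $\oo(n^2)$: without the alignment assumption, $Z$ genuinely has to appear in a matrix-vector product that cannot be replaced by a scalar update of a constant vector as in~\eqref{xjcheap}.
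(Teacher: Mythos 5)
Your proposal is correct and follows essentially the same route as the paper: both exploit the orthogonal link $W=QV$ to transport the identity of Lemma~\ref{L_VplusVplusT} to the arbitrarily oriented simplex and then read off $g$ as a single matrix--vector product with $Z-z_0e^T$. The only cosmetic differences are that you start from the normal equations and invoke Theorem~\ref{T_Weightless}, whereas the paper starts from the equivalent square interpolation system of Section~\ref{S_alternate}, and your derivation of $W_+W_+^T=\alpha^2 I$ by direct conjugation is a slightly cleaner version of the paper's computation with $Q=YV^{-1}$.
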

\begin{proof}
The interpolation conditions for the simplex gradient can be written as
\begin{eqnarray}\label{E_Zinterpolation}
\left((z_j-z_0) -(z_{n+1} -z_0)\right)^Tg= f_j-f_{n+1},\quad j=1,\dots,n.
\end{eqnarray}
Let $Y_+=\mat{Y&-Ye}$ be the regular simplex
with unit radius and with centroid at the origin defined by
\begin{eqnarray}\label{E_Ysimplex}
Y= \tfrac1h(Z - z_0e^T),
\end{eqnarray}
and let $Q \in \R^{n\times n}$ be the orthogonal transformation linking
$Y_+$ to the oriented simplex $V_+ =\mat{V&-Ve}$ where
$V= \alpha(I-\gamma ee^T)$ so that
$$
Y=QV.          
$$
The square system of equations (\ref{E_Zinterpolation}) can be written in matrix form as
\begin{eqnarray}\label{E_ZinterpolationMV}
h(Y+Yee^T)^T g= \ff - f_{n+1}e.
\end{eqnarray}
Pre-multiplying by the invertible matrix $Y$ and dividing by $h$ we get
\begin{eqnarray}\label{E_g}
(YY^T+Yee^TY^T) g = \tfrac1hY\left(\ff-f_{n+1}e\right).
\end{eqnarray}
Now
\begin{eqnarray*}
 YY^T &=& QV^2Q^T\\
      &\overset{\eqref{Aalphabeta}}{=}&  \alpha^2Q(I-\beta ee^T)Q^T \\
      &=& \alpha^2(I-\beta Qee^TQ^T).
\end{eqnarray*}
But $Q=YV^{-1}$ so $Qe=YV^{-1}e$. By \eqref{Ve}, $Ve=\pm\tfrac{1}{\sqrt{n}} e$, so that $V^{-1}e=\pm\sqrt{n}e$ and we have
$$
Qee^TQ^T = nYee^TY^T .               
$$
Therefore,
$$
 YY^T=\alpha^2I -\alpha^2\beta n Yee^TY^T.
$$
Using the definitions \eqref{alphabeta}, $\alpha^2\beta n = 1$, so that
$$YY^T = \alpha^2I -Yee^TY^T.$$
Inserting this result in \eqref{E_g} we get
$$
  g=\tfrac{1}{\alpha^2 h}Y\left(\ff-f_{n+1}e\right),
$$
which is a simple matrix-vector product costing $\oo(n^2)$ flops.
In fact we do not need to calculate $Y$.  Substituting for $Y$ from
equation \eqref{E_Ysimplex} gives
\begin{eqnarray}\label{E_gradient}
g= \tfrac{1}{\alpha^2h^2}(Z-z_0e^T)(\ff-f_{n+1}e).
\end{eqnarray}
Letting $u$ be as defined in \eqref{eq_u} gives the result \eqref{E_gradient2}. Finally, note that the dominant computation in \eqref{E_gradient2} is the matrix-vector product $Zu$, which has a computational complexity of $\oo(n^2)$ (see for example, \cite[p.2]{Watkins10}).
\qed
\end{proof}

In practice, the centroid $z_0$ will often be known but even if it is not given initially, its calculation is  at most $\oo(n^2)$ flops because $z_0 = \frac{1}{n+1}\sum_{j=1}^{n+1} z_j$. If a new simplex is formed by resizing a given simplex but keeping one vertex in common then the new centroid can be easily calculated from the old centroid and the resizing parameter in $\oo(n)$ flops. Finally, we note that if $h$ is unknown it can be calculated as $h = \|z_j-z_0\|_2$ for any $j$, which is an additional cost of $\oo(n)$ flops.

\subsection{Regular simplexes with integer entries}

The results of Section \ref{S_Simplex} show that one can construct
a regular simplex with \emph{integer} coordinate vertices in $n$-space when
$n+1$ is a perfect square.  Simply let $x_0=0$ be the centroid of the simplex
so that $x_j=hv_j, j=1,\dots,n+1$ are the $n+1$ vertices. Writing
$X_+=[x_1,\dots,x_{n+1}]$, we choose $X_+$ to be proportional to
the rational matrix $\tfrac{1}{\alpha}\V$.  For example, when $n=3$, then $n+1=4$ is a perfect square, so two examples of regular simplexes in $\R^3$ with integer coordinates, corresponding to the
two choices for $\gamma$ in \eqref{gamma}, are
\begin{eqnarray*}
  X_+ &=& \mat{5&-1&-1&-3\\-1&5&-1&-3\\-1&-1&5&-3}\in \mathbf{Z}^{3\times 4}
\end{eqnarray*}
and
\begin{eqnarray*}
  X_+ &=& \mat{1&-1&-1&1\\-1&1&-1&1\\-1&-1&1&1}\in \mathbf{Z}^{3\times 4}.
\end{eqnarray*}
Sch\"{o}enberg \cite{Schoenberg37} proved that a regular $n$-simplex exists in $\R^n$ with integer coordinates in the following cases, and no others:
\begin{itemize}
  \item[(i)] $n$ is even \emph{and} $n+1$ is a square;
  \item[(ii)] $n \equiv 3 \pmod 4$;
  \item[(iii)] $n \equiv1 \pmod4$ \emph{and} $n+1$ is a sum of two squares.
\end{itemize}
In particular, the first few values  of $n$ for which
integer coordinate vertices exist are $n=1,3,7,8,9,11,15,17,19,\dots$, and do not exist for $n=2,4,5,6,10,12,13,\\14,16,18,20\dots$.

\subsection{Order $\oo(h^2)$ gradient approximation}\label{S_Oh2}

At certain stages of an optimization algorithm an accurate gradient may be required. This is the case, for example, when deciding whether to reduce the mesh/grid size in mesh/grid based optimization algorithms, or for deciding whether a gradient based stopping condition has been satisfied. In such cases, an $\oo(h)$ gradient approximation may not be sufficient, and a more accurate gradient, say an $\oo(h^2)$ gradient approximation, may be desired.

The construction proposed in this paper allows one to obtain an inexpensive aligned regular simplex gradient, which is an $\oo(h)$ approximation to the true gradient. However, it is well known in the statistics community that a Richardson's extrapolation approach can be used to increase the accuracy of an approximation or iterative method by (at least) an order of magnitude, see for example \cite{Dimov17,Richardson1927}. Indeed, using the set-up in this paper, we now demonstrate how to obtain an $\oo(h^2)$ approximation to the true gradient in $\oo(n)$ operations and storage, although extra function evaluations will be required.

The key idea behind Richardson's extrapolation is to take two approximations that are $\oo(h)$, and use these to construct an $\oo(h^2)$ approximation. To this end, fix $x_0$, let $G = \nabla^2 f(x_0)$ and choose $h_1 = \oo(h)$. Then one can form a regular simplex with centroid $x_0$ and diameter $h_1$ with the vertices and `arms' satisfying  $x_j - x_0 = h_1 v_j$ for $j=1,\dots,n+1$. Now, consider the Taylor series of $f$ about $x_0$:
\begin{eqnarray*}
  f_j &=& f_0 + (x_j - x_0)^T \nabla f(x_0) + \tfrac12(x_j - x_0)^TG(x_j - x_0) + \oo(h^3)\\
  &=& f_0 + h_1v_j^T \nabla f(x_0) + \tfrac{h_1^2}2v_j^TGv_j + \oo(h^3).\\
\end{eqnarray*}
Rearranging the above and dividing by $h_1$ gives
\begin{eqnarray}\label{eq:interim}
  v_j^T \nabla f(x_0) = \tfrac1{h_1}(f_j - f_0) - \tfrac{h_1}2v_j^TGv_j + \oo(h^2).
\end{eqnarray}
An expression of the form \eqref{eq:interim} can be written for each $j=1,\dots,n+1$. Combining the $n+1$ equations, using the notation established previously, gives
\begin{eqnarray}\label{eq:extrap_intermediatestep}
  \V^T\nabla f(x_0) = \tfrac1{h_1}\dfp - \tfrac{h_1}2{\rm diag} (\V^TG\V)e + \oo(h^2),
\end{eqnarray}
where ${\rm diag}(\V^TG\V)$ is a diagonal matrix with $({\rm diag}(\V^TG\V))_{jj} = v_j^TGv_j$. Let
\begin{equation}\label{eq:C}
  C = - \tfrac{1}2(\V\V^T)^{-1}\V({\rm diag} V^TGV)e,
\end{equation}
so that \eqref{eq:extrap_intermediatestep} becomes
\begin{eqnarray}\label{g1vsg2}
  \nabla f(x_0) 
  &=& g_1 + h_1 C + \oo(h^2),
\end{eqnarray}
where $g_1 = \tfrac1{h_1}(\V\V^T)^{-1}\V\df$. By \eqref{E_SimplexGradientNormalEqns}, $g_1$ is an $\oo(h)$ approximation to the gradient at the point $x_0$.

Now, fix the same $x_0$ and direction vectors $v_1,\dots,v_{n+1}$, and choose some $h_2 = \oo(h)$. Then, constructing a simplex of diameter $h_2$ and following the same arguments as above, we arrive at the expression
\begin{eqnarray}\label{g1vsg2g2}
  \nabla f(x_0) = g_2 + C h_2 + \oo(h^2),
\end{eqnarray}
where $C$ is defined in \eqref{eq:C}, and $g_2 = \tfrac1{h_2}(\V\V^T)^{-1}\V\df$ is an $\oo(h)$ approximation to the gradient at the point $x_0$.

Finally, multiplying \eqref{g1vsg2} by $h_2$, multiplying \eqref{g1vsg2g2} by $h_1$ and subtracting the second expression from the first, results in
\begin{eqnarray}\label{Oh2gradgeneral}
  \nabla f(x_0) = g_{12} + \oo(h^2),\qquad \text{where} \qquad g_{12} = \frac{h_2 g_1 - h_1g_2}{h_2-h_1},
\end{eqnarray}
i.e., $g_{12}$ is an order $h^2$ accurate approximation to the true gradient at $x_0$.

Moreover, if $h_2$ is chosen to be a multiple of $h_1$ (i.e., $h_2 = \eta h_1$) then
\begin{eqnarray}\label{Oh2grad}
  g_{12} = \frac{\eta h_1 g_1 - h_1g_2}{\eta h_1-h_1} = \frac{\eta}{\eta-1}g_1 - \frac{1}{\eta-1}g_2 .
\end{eqnarray}

To make the previous arguments concrete, an algorithmic description of the procedure to find an $\oo(h^2)$ approximation to the gradient is given in Algorithm~\ref{alg:h2from2h}. Briefly, the algorithm proceeds as follows. In Steps 2--3, an $\oo(h)$ aligned regular simplex gradient is formed via equation \eqref{E_Ong} (i.e., using the procedure developed previously in this work). To obtain an $\oo(h^2)$ gradient approximation, a second (related) $\oo(h)$ aligned regular simplex gradient approximation is also needed, and this is computed in Steps 4--5 of Algorithm~\ref{alg:h2from2h}. Finally, in Step 6, a weighted sum of the two $\oo(h)$ gradients is formed, resulting in an $\oo(h^2)$ regular simplex gradient approximation.
\begin{algorithm*}[h!]
\caption{Forming an $\oo(h^2)$ gradient approximation from two $\oo(h)$ gradient approximations via Richardson's extrapolation.}
\label{alg:h2from2h}
\begin{algorithmic}[1]
    \STATE \textbf{Input: } Centroid $x_0$, problem dimension $n$, scalars $h_1 \sim \oo(h)$ and $h_2 \sim \oo(h)$.
    \STATE Input or compute: $f(x_j) \overset{\eqref{xjcheap}}{=} f(x_0+h_1\alpha(e_j - \gamma e))$ for $j = 1,\dots,n+1$.
    \STATE Compute $g_1$ via \eqref{E_Ong} using $h_1$.
    \STATE Input or compute: $f(x_j) \overset{\eqref{xjcheap}}{=} f(x_0+h_2\alpha(e_j - \gamma e))$ for $j = 1,\dots,n+1$.
    \STATE Compute $g_2$ via \eqref{E_Ong} using $h_2$.
    \STATE Compute $g_{12}$ via \eqref{Oh2gradgeneral}.
\end{algorithmic}
\end{algorithm*}

\begin{remark}\label{remarkOh2}
We make the following comments.
\begin{enumerate}
  \item The $\oo(h^2)$ gradient approximation \eqref{Oh2grad} is simply a weighted sum of two $\oo(h)$ gradient approximations. The coefficients of $g_1$ and $g_2$ sum to 1.
  \item In the context of Richardson's extrapolation, the parameter $\eta$ in \eqref{Oh2grad} can be chosen to be either positive or negative, (but, to avoid division by zero, it cannot be set to 1). However, in the context of this work, $h_1$ and $h_2$ denote the radii of simplexes, so they must be positive (recall the relationship $h_2 = \eta h_1$). We stress that, computationally, there is no issue here when $\eta<0$, but we must interpret the scaling parameter $\eta$ carefully. Geometrically, if $\eta$ is a positive value, then the simplex generated using $h_2$ (see Steps~4--5 in Algorithm~\ref{alg:h2from2h}) is simply a scaled version of the original simplex defined using $h_1$ (both simplexes sharing the common centroid $x_0$). However, if $\eta$ is negative, we still use the (negative) value $h_2$ when performing the computations in Algorithm~\ref{alg:h2from2h}, but geometrically we interpret the simplex radius to be $|h_2|$, \emph{and the simplex has been rotated by } $180^{\circ}$ (again with both simplexes sharing the common centroid $x_0$). See the numerical example in Section~\ref{SS_exp2} and Figure~\ref{Fig_Simplex_numerics}.
  \item In this section the derivation proceeds by assuming that the 2 simplex gradients $g_1$ and $g_2$ are both computed at the \emph{same point} $x_0$, and thus $g_{12}$ is an $\oo(h^2)$ accuracy approximation to $\nabla f(x_0)$ (and by results previously presented in this work, $g_1$, $g_2$ and $g_{12}$ all have a computational cost of $\oo(n)$). However, the arguments in Section~\ref{S_Oh2} can be generalized to an $\oo(h^2)$ approximation to $\nabla f(x)$, for some other point $x$ say, so long as both $g_1$ and $g_2$ are $\oo(h)$ simplex gradients at the common point $x$. Of course, the computational cost of obtaining $g_1$ and $g_2$ may be higher than $\oo(n)$ for general $x$.
      \end{enumerate}
\end{remark}

\section{Numerical example}\label{S_Numerical}

Here we present two numerical examples to make the ideas of the paper concrete, to highlight the simplicity and economy of our approach, and to demonstrate how an $\oo(h^2)$ approximation to the gradient can be constructed from two $\oo(h)$ aligned regular simplex gradients. All experiments are performed on Rosenbrock's function, and MATLAB (version 2016a) is used for the calculations.

We temporarily depart from our usual notation and let $y \in \R^2$ with components $y = \mat{y_1&y_2}^T$ so that Rosenbrock's function can be written as
\begin{eqnarray}\label{E_rosenbrock}
  f(y_1,y_2) = (1-y_1)^2+100(y_2-y_1^2)^2.
\end{eqnarray}
 The gradient of \eqref{E_rosenbrock} can be expressed analytically as
\begin{eqnarray}\label{E_rosenbrock_grad}
  \nabla f(y_1,y_2) = \mat{-2(1-y_1) -400y_1(y_2-y_1^2)\\200(y_2-y_1^2)}.
\end{eqnarray}

Henceforth, we return to our usual notation.

\subsection{Inconsistent simplex gradients}\label{SS_exp1}

The purpose of this example is to highlight a situation that is not uncommon in derivative free optimization algorithms --- that of encountering an iterate where the true (analytic) gradient and the simplex gradient point in opposite directions --- and how the construction in Section~\ref{S_Oh2} can be used to determine an accurate gradient direction from which to make further progress. This situation can arise, for example, when the gradient of a function at the iterate $x^{(k)}$ is close to flat.

Indeed, this is one of the motivations for considering Rosenbrock's function, which has a valley floor with a shallow incline. To highlight the situation previously described, we have selected a test point that is very close to the `floor' of the valley of Rosenbrock's function, where a good approximation to the gradient is required to make progress. (Ultimately, descent methods do track this valley floor, so it is not unexpected that we may encounter a point of this nature.) We stress that the loss of accuracy is due to the regular simplex gradient being a first order approximation ($\oo(h)$) to the analytic gradient, and is not because of the particular construction proposed in this work.

The example proceeds as follows. Suppose one wishes to compute a regular simplex gradient at the point
\begin{equation}\label{x0}
  x_0 = \mat{1.1\\1.1^2+10^{-5}}.
\end{equation}
Note that, from \eqref{E_rosenbrock_grad}, the true gradient at the point $x_0$ is (to the accuracy displayed)
\begin{eqnarray}\label{Grad_x0}
  \nabla f(1.1,1.1^2+10^{-5}) = \mat{0.195599999999971\\0.002000000000013}.
\end{eqnarray}
The aligned regular simplex is constructed using the approach presented in Section~\ref{S_Simplex}. In particular, $n=2$ for Rosenbrock's function so that
\begin{eqnarray}\label{Numerical_abg}
  \alpha \overset{\eqref{alphabeta}}{=}  \sqrt{\frac{3}{2}} \qquad \beta\overset{\eqref{alphabeta}}{=}  \frac13 \qquad \gamma\overset{\eqref{gamma}}{=} \frac12\left(1+\frac1{\sqrt{3}}\right).
\end{eqnarray}
Then, recalling that $V = \alpha(I-\gamma ee^T)$ (see \eqref{V}) we have
\begin{eqnarray}\label{NumericalVp}
  \V = \mat{V&-Ve} = \mat{0.2588&-0.9659&0.7071\\-0.9659&0.2588&0.7071}.
\end{eqnarray}
Recall that the connection between the arms of the simplex and vertices of the simplex is given in \eqref{vj} as $x_j = x_0 + hv_j$ for $j = 1,2,3$ and for some $h\in\R$. For this experiment we choose $h_1 = 10^{-3}$ so that the three vertices of the simplex are given as the columns of
\begin{equation}\label{E_X}
  X_+ = \mat{x_1&x_2&x_3}=\mat{1.1003&1.0990&1.1007\\1.2090&1.2103&1.2107}.
\end{equation}
The aligned regular simplex gradient (at the point $x_0$) can be computed in $\oo(n)$ operations using Theorem~\ref{T_Ongrad} (which requires the function values $f_1,f_2,f_3$ computed at the points $x_1,x_2,x_3$ via \eqref{E_rosenbrock}), and is as follows:
\begin{eqnarray}\label{SimplexGradg1}
  g_{1} = \mat{-0.095750884326868\\-0.017496117072893}.
\end{eqnarray}
Notice that the regular simplex gradient is very different from the true gradient \eqref{Grad_x0}. Not only are the magnitudes of the numbers different but the regular simplex gradient \eqref{SimplexGradg1} even has the opposite sign from the true gradient.
This loss of accuracy is  inevitable for any first order numerical method used to approximate a gradient close to a stationary point and the usual remedy is to switch to a second order method.

However, using the techniques presented in this paper it is cheap to compute an aligned regular simplex gradient. So, suppose another approximation to the true gradient is constructed, again at the point $x_0$ \eqref{x0}, but using a different simplex diameter $h_2$. That is, suppose we set $h_2 = \tfrac12 h_1$ ($h_1$ and $h_2$ are of the same order) so that $\V$ remains unchanged, but the simplex vertices become:
\begin{equation}\label{E_Xp}
  X_+' = \mat{x_1'&x_2'&x_3'} = \mat{1.1001&1.0995&1.1004\\1.2095&1.2101&1.2104}.
\end{equation}
The function values $f_1',f_2',f_3'$ are computed at the points $x_1',x_2',x_3'$ and then the aligned regular simplex gradient (at the point $x_0$) can be computed in $\oo(n)$ operations via Theorem~\ref{T_Ongrad}:
\begin{eqnarray}\label{exp1:g2}
    g_2 = \mat{\phantom{-}0.049842074409398\\-0.007735568480143}.
\end{eqnarray}
Notice that $g_{2}$ is different from that given in \eqref{Grad_x0}; again, the signs and numbers do not match. In practice we do not have access to the true gradient so we are left to compare $g_{1}$ and $g_{2}$. Notice the sign of the first component $g_{1}$ is opposite from that of $g_{2}$ (so they point in different directions) and the numerical values of the components are also different.

In this situation it is beneficial to use the ideas from Section~\ref{S_Oh2} to improve the accuracy of the simplex gradient at $x_0$. To this end, from \eqref{Oh2grad} one can compute the $\oo(h^2)$ approximation to the true gradient:
\begin{eqnarray}\label{exp1:g12}
  g_{12} = 2g_2 - g_1 = \mat{0.195435033145664\\0.002024980112607}.
\end{eqnarray}
Clearly, $g_{12}$ is a very good approximation to the true gradient; the sign of $g_{12}$ matches that of $\nabla f(x_0)$, and the magnitude of the components aligns very well too, agreeing to 3 significant figures. Note that, because $h_1 = 10^{-3}$, one only expects 3 significant figures of accuracy.

We have repeated the experiment above for fixed $x_0$ and $\V$, but for varying values of $h_1$ (with the relation $h_2 = \tfrac12 h_1$ holding for each choice of $h_1$). The results are shown in Figure~\ref{fig:changingh}. The error is measured as the difference between the true gradient $\nabla f(x_0)$ stated in \eqref{Grad_x0} and '$g$', where $g$ is a notational placeholder for $g_1$ \eqref{E_Ong}, $g_2$ \eqref{E_Ong} or $g_{12}$ \eqref{Oh2gradgeneral} as appropriate. The purpose of this experiment is to show that, as $h_1$ shrinks, the error decreases linearly, as proven in Theorem~\ref{errorx0}. The upper bound on the error (again see Theorem~\ref{errorx0}) is $\tfrac12 L h \sqrt{n}$. We selected the value 2000 to approximate the Lipschitz constant, because $L \approx \|\nabla^2f(x_0) - \nabla^2f(x_1)\|_2/\|x_0 - x_1\|_2 = 1.0769\times 10^3 \leq 2000$, where $x_1$ was the simplex vertex computed for $h_1 = 10^{-3}$. Figure~\ref{fig:changingh} also shows that the gradient approximation $g_{12}$ --- formed by applying a Richardson's extrapolation strategy to $g_1$ and $g_2$ --- is very accurate.
\begin{figure}[h!]\centering
  \includegraphics[width=12cm]{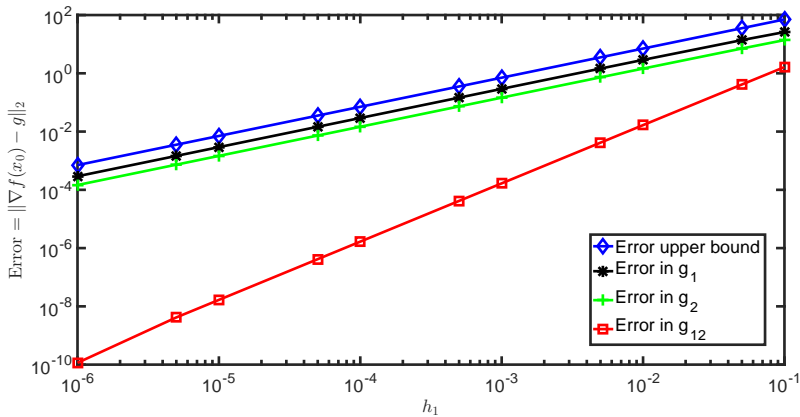}
  \label{fig:changingh}
  \caption{Plot showing the error in the gradient approximation as $h_1$ varies.}
\end{figure}

\subsection{High accuracy near the solution}\label{SS_exp2}
In this example we show how the techniques in Section~\ref{S_Oh2} can be used to hone in on a stationary point.
Suppose one wishes to compute the regular simplex gradient at the point
\begin{equation}\label{x0h2}
  x_0 = \mat{0.9\\0.81},
\end{equation}
which is close to the solution $x^* = \mat{1&1}.$ Using \eqref{E_rosenbrock_grad}, the analytic gradient at $x_0$ \eqref{x0h2} is
\begin{eqnarray}\label{Grad_x0h2}
  \nabla f(0.9,0.81) = \mat{-0.2000000000000000\\0}.
\end{eqnarray}
Now we construct the aligned regular simplex using the approach in Section~\ref{S_Simplex}. Here, $n=2$, $\alpha$, $\beta$ and $\gamma$ are the same as in \eqref{Numerical_abg}, and $\V$ is the same as in \eqref{NumericalVp}. We choose $h_1 = 10^{-6}$. The vertices of the simplex are computed as $x_j = x_0 + h_1v_j$ for $j = 1,2,3$ (see \eqref{vj}), and are the columns of
\begin{equation}\label{E_X}
  X_+ = \mat{x_1&x_2&x_3}=\mat{0.90000026&0.89999903&0.90000071\\
   0.80999903&0.81000026&0.81000071}.
\end{equation}
The simplex gradient is computed in $\oo(n)$ operations using Theorem~\ref{T_Ongrad} and is
\begin{eqnarray}\label{SimplexGradg1h2}
  g_1 = \mat{-0.200206828472801\\-0.000047729764447}.
\end{eqnarray}
The regular simplex gradient $g_1$ is a good approximation to the true gradient \eqref{Grad_x0h2}. The first component of \eqref{SimplexGradg1h2} has the same sign as the first component of \eqref{Grad_x0h2}, and they match to 3 significant figures. Also, the second component of \eqref{SimplexGradg1h2} is $\sim -5\times10^{-5}$, which while not exactly zero, is still small.

Now consider computing a second aligned regular simplex gradient, again at the point $x_0$, but now with $h_2 = -\tfrac12 h_1$, recall Remark~\ref{remarkOh2}(2). (We choose a negative multiple for demonstration purposes only.) The vertices of the simplex are computed as $x_j = x_0 + h_2v_j$ for $j = 1,2,3$ (see \eqref{vj}), and are the columns of
\begin{equation}\label{E_X}
  X_+ = \mat{x_1&x_2&x_3}=\mat{0.89999987&0.90000048&0.89999965\\
   0.81000048&0.80999987&0.80999965}.
\end{equation}
Using Theorem~\ref{T_Ongrad}, the regular simplex gradient is
\begin{eqnarray}\label{SimplexGradg1h22}
  g_2 = \mat{-0.199896585549141\\\phantom{-}0.000023864840841}.
\end{eqnarray}
Again, $g_2$ is a good approximation to the true gradient. The first components of \eqref{SimplexGradg1h22} and \eqref{Grad_x0h2} are very similar, and the second component of \eqref{SimplexGradg1h22} is also small. Notice that $g_1$ and $g_2$ are also similar, although the sign of the second component of $g_1$ is opposite that of $g_2$.
We can now use \eqref{Oh2grad} to combine $g_1$ and $g_2$ and obtain an $\oo(h^2)$ approximation to the true gradient:
\begin{eqnarray}
  g_{12} = \mat{-0.199999999857027\\
  -0.000000000027588}.
\end{eqnarray}
Clearly, $g_{12}$ is a very good approximation to $\nabla f(x_0)$. Notice that the approximation $g_{12}$ is now accurate to 10 decimal places.

These examples make it clear that obtaining a high accuracy aligned regular simplex gradient is cheap (once function evaluations have been computed). Each regular simplex gradient (i.e., $g_1$ and $g_2$) is obtained in $\oo(n)$ operations, and the $\oo(h^2)$ approximation $g_{12}$ is simply a weighted sum of $g_1$ and $g_2$, so it also costs $\oo(n)$.

In the above examples the  simplexes had the same centroid for each first order gradient calculation but this need not always be the case.
Sometimes the new simplex is obtained by shrinking (or expanding) the current
simplex keeping one of the vertices fixed and/or by rotating the current simplex about a vertex.  In such cases the formula \eqref{Oh2grad} can still be applied and gives a second order estimate at the vertex common to the two
simplexes used in the two first order estimates.  The so-called `centered
difference simplex gradient' \cite[p.115]{Kelley1999} is one such
example.
If the centroid of the simplex is not used it may also be convenient to replace the `arm-length' $h$ by the edge length $s$. These are simply related through the cosine rule ($s=\sqrt{2}\alpha h=h\sqrt{2+2/n}$).

We conclude this section with a schematic of the simplexes generated in each of these numerical experiments. The left plot in Figure~\ref{Fig_Simplex_numerics} relates to the experiment in Section~\ref{SS_exp1}, while the right plot relates to the experiment in Section~\ref{SS_exp2}. In the left plot in Figure~\ref{Fig_Simplex_numerics}, points $x_1,x_2,x_3$ (see \eqref{E_X}) represent vertices of the simplex with $h_1=10^{-3}$. Points $x_1',x_2',x_3'$ (see \eqref{E_Xp}) represent vertices of the simplex with $h_2=\frac12 h_1 =5\times 10^{-4}$. This choice of $h_2$ simply shrinks the regular simplex while maintaining the orientation of the original simplex. On the other hand, the right plot in Figure~\ref{Fig_Simplex_numerics} corresponds to the experiment in Section~\ref{SS_exp2}. In particular, points $x_1,x_2,x_3$ represent vertices of the simplex with $h_1=10^{-6}$. However points $x_1',x_2',x_3'$ represent vertices of the simplex with $h_2=-\frac12 h_1$. This choice of $h_2$ shrinks and also rotates the regular simplex (Because this is an aligned regular simplex, this is equivalent to rotating the simplex about $x_0$ by $180^{\circ}$).

\begin{figure}[h!]\centering
   \begin{tikzpicture}
    [scale=2.5]
    \draw[thick] (1.1,1.21) --(1.3588,0.2441);
  \draw[thick] (1.1,1.21) --(0.1341,1.4688);
  \draw[thick] (1.1,1.21) --(1.8071,1.9171);
  \draw [fill] (1.1,1.21) circle [radius=0.02];
  \draw [fill] (1.3588,0.2441) circle [radius=0.02];
  \draw [fill] (0.1341,1.4688) circle [radius=0.02];
  \draw [fill] (1.8071,1.9171) circle [radius=0.02];
  \draw [fill] (1.2294,0.7270) circle [radius=0.02];
  \draw [fill] (0.6170,1.3394) circle [radius=0.02];
  \draw [fill] (1.4536,1.5636) circle [radius=0.02];
  \node at (1.05,1.33) {$x_0$};
  \node at (1.45,0.15) {$x_1$};
  \node at (0,1.55) {$x_2$};
  \node at (1.9,2) {$x_3$};
  \node at (1.35,0.7) {$x_1'$};
  \node at (0.6,1.2) {$x_2'$};
  \node at (1.55,1.45) {$x_3'$};
\end{tikzpicture}
\hspace{1cm}
\begin{tikzpicture}
    [scale=2.5]
    \draw[thick] (1.1,1.21) --(1.3588,0.2441);
  \draw[thick] (1.1,1.21) --(0.1341,1.4688);
  \draw[thick] (1.1,1.21) --(1.8071,1.9171);
  \draw [fill] (1.1,1.21) circle [radius=0.02];
  \draw [fill] (1.3588,0.2441) circle [radius=0.02];
  \draw [fill] (0.1341,1.4688) circle [radius=0.02];
  \draw [fill] (1.8071,1.9171) circle [radius=0.02];
  \draw[thick] (1.1,1.21) --(0.9706,1.6930);
  \draw[thick] (1.1,1.21) --(1.5829,1.0806);
  \draw[thick] (1.1,1.21) --(0.7464,0.8565);
  \draw [fill] (0.9706,1.6930) circle [radius=0.02];
  \draw [fill] (1.5829,1.0806) circle [radius=0.02];
  \draw [fill] (0.7464,0.8565) circle [radius=0.02];
  \node at (0.95,1.33) {$x_0$};
  \node at (1.45,0.15) {$x_1$};
  \node at (0,1.55) {$x_2$};
  \node at (1.9,2) {$x_3$};
  \node at (1.05,1.75) {$x_1'$};
  \node at (1.73,1.10) {$x_2'$};
  \node at (0.6,0.8) {$x_3'$};
\end{tikzpicture}
\caption{A schematic of the simplexes generated in the numerical experiments. The left plot relates to the experiment in Section~\ref{SS_exp1}, while the right plot relates to the experiment in Section~\ref{SS_exp2}. }
\label{Fig_Simplex_numerics}
\end{figure}
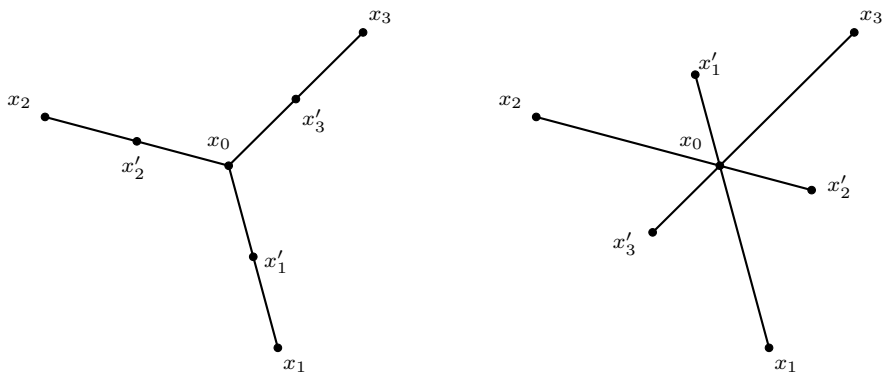

\section{Conclusion}\label{S_Conclusion}
In this work it was shown that a simplex gradient can be obtained efficiently, in terms of the linear algebra and memory costs, when the simplex is regular and appropriately aligned. A simplex gradient is the least-squares solution of a system of linear equations, which can have a computational cost of $\oo(n^3)$ for a general and unstructured system. However, due to the properties of the aligned regular simplex, the linear algebra of the least squares system simplifies, and the aligned regular simplex gradient is simply a weighted sum of the function values (measured at the vertices of the simplex) and a constant vector. Therefore, the computational cost of obtaining an aligned regular simplex gradient is only $\oo(n)$. Furthermore, the storage costs are low. Indeed, $\V$ need not be stored at all; the vertices of the aligned regular simplex can be constructed on-the-fly using only the centroid $x_0$ and radius $h$. Moreover, we have shown that, if the regular simplex is arbitrarily oriented, then the regular simplex gradient can be computed in at most $\oo(n^2)$.

Several extensions of this work were presented, including the easy generation of a simplex with integer coordinates when $n+1$ is a perfect square. We also showed that Richardson's extrapolation can be employed to obtain an $\oo(h^2)$ accuracy approximation to the true gradient from two regular simplex gradients.

\subsection{Future Work}

The main contribution of this work was to show that a regular simplex gradient can be determined efficiently in terms of the numerical linear algebra and storage costs. Simplex gradients are useful in a wide range of contexts and applications, including using the simplex gradient to determine a search direction, employing the simplex gradient in an algorithm termination condition, and determining when to shrink the mesh size in a grid based method. Future work includes embedding this inexpensive regular simplex gradient computation into an optimization routine to investigate how the regular simplex gradient calculation affects overall algorithm performance.

\begin{acknowledgements}
The authors thank Luis Vicente, and the anonymous referees for their helpful comments and suggestions, leading to improvements in an earlier version of this work.
\end{acknowledgements}

\bibliographystyle{spmpsci}
  \bibliography{COAP_DFO_bib}

\end{document}